\def\namedlabel#1#2{\begingroup
    #2%
    \def\@currentlabel{#2}%
    \phantomsection\label{#1}\endgroup
}
\renewcommand{\d}{\partial}
\newcommand{\wt}[1]{\widetilde{#1}}
\newcommand{\lc}{\left<}
\newcommand{\rc}{\right>}
\newcommand{\cw}[1]{\check{#1}}
\newcommand{\eps}{\epsilon}
\newcommand{\veps}{\varepsilon}
\newcommand{\vphi}{\varphi}
\newcommand{\al}{\alpha} 
\newcommand{\ze}{\zeta}
\newcommand{\de}{\delta}
\newcommand{\la}{\lambda}
\newcommand{\om}{\omega}
\newcommand{\te}{\theta}
\newcommand{\vka}{\varkappa}
\newcommand{\ka}{\kappa}
\newcommand{\Om}{\Omega}
\newcommand{\De}{\Delta}
\newcommand{\cP}{\mathcal{P}}
\newcommand{\cH}{\mathcal{H}}
\newcommand{\cL}{\mathcal{L}}
\newcommand{\cO}{\mathcal{O}}
\newcommand{\bB}{\mathbb{B}}
\newcommand{\bK}{\mathbb{K}}
\newcommand{\bP}{\mathbb{P}}
\newcommand{\bR}{\mathbb{R}}
\newcommand{\bC}{\mathbb{C}}
\newcommand{\bN}{\mathbb{N}}
\newcommand{\vpi}{\varpi}
\newcommand{\wh}[1]{\widehat{#1}}
\newcommand{\ov}[1]{\overline{#1}}
\newcommand{\uU}[1]{{\rm U}#1}
\newcommand{\cali}[1]{\mathscr{#1}}
\newcommand{\Cc}{\cali{C}}
\newtheorem{thm}{Theorem}
\newtheorem{prop}[thm]{Proposition}
\newtheorem{lem}[thm]{Lemma}
\newtheorem{cor}[thm]{Corollary}
\theoremstyle{definition}
\newtheorem{defn}[thm]{Definition}
\newtheorem{remark}[thm]{Remark}
\newtheorem{expl}[thm]{Example}
\newtheorem*{rmkx}{Remark}
\newtheorem{ques}[thm]{Question}
\numberwithin{thm}{section}
\numberwithin{equation}{section}
\renewcommand{\[}{\begin{equation}}
\renewcommand{\]}{\end{equation}}
\title[Regularity of extremal functions on compact K\"ahler manifolds]{Regularity of the Siciak-Zaharjuta extremal function on compact K\"ahler manifolds}
\author{Ngoc Cuong Nguyen} 
\address{Department of Mathematical Sciences, KAIST, 291 Daehak-ro, Yuseong-gu, Daejeon 34141, South Korea}
\email{cuongnn@kaist.ac.kr}
\begin{document} 


\begin{abstract} We prove that the regularity of the extremal function of a compact subset of a compact K\"ahler manifold is a local property, and that the continuity and H\"older continuity are equivalent to classical notions of the local $L$-regularity and the locally H\"older continuous property in pluripotential theory. As a consequence we give an effective characterization of the $(\Cc^\al, \Cc^{\al'})$-regularity of compact sets, the notion introduced by Dinh, Ma and Nguyen. Using this criterion all compact fat subanalytic sets in $\bR^n$ are shown to be regular in this sense.
\end{abstract}

\keywords{Siciak-Zaharjuta extremal function, K\"ahler manifolds, $L$-regularity, H\"older continuous, subanalytic sets}

\maketitle

\section{Introduction}
{\em Background.}
Let $\cL$ denote the Lelong class of plurisubharmonic  functions on $\bC^n$ with logarithmic growth at infinity, i.e.,
$$	\cL = \left\{v \in PSH(\bC^n): u(z) - \frac{1}{2}\log (1+ |z|^2) \leq c_u \right\}.
$$
The classical extremal function for a bounded subset $E$  in $\bC^n$  is defined by
\[\label{eq:SZ-classic}
	L_{E}(z) = \sup\{ v(z) : v \in \cL, \, v \leq 0 \text{ on } E\}.
\]
This function was introduced by Siciak \cite{Si62,Siciak81} using the polynomials and characterized by Zaharjuta \cite{Za76} via plurisubharmonic functions. 
Since its introduction the extremal function has found numerous applications in pluripotential theory, approximation theory in $\bC^n$ and other areas. It often turns out that geometrical properties of the compact subset $E$ can be read from the regularity of its extremal function $L_E$. It is well-known that $E$ is non-pluripolar if and only if its upper semicontinuous regularization $L_E^*$ is bounded on $E$. 

The continuity of the extremal function is a classical topic. 
Let $K \subset \bC^n$ be a compact subset and $a\in K$. We say that
\begin{itemize}
\item[($1^o$)]  $K$  is  {\em $L$-regular at $a$} if $L_K$ is continuous at $a$;
\item[($2^o$)]
$K$ is  {\em $L$-regular} if $L_K$ is continuous at every point of $K$.
\end{itemize}
Notice that in ($2^o$) if $L_K$ is continuous at every point of $K$, then $L_K$ is continuous in $\bC^n$ by \cite{Siciak81} and \cite{Za76}. 
Its local version in \cite{Siciak81}  will be important for us. The compact set
$K$ is said to be 
\begin{itemize}
\item
[($3^o$)]  {\em locally $L$-regular at $a$} if $L_{K\cap B(a,r)}$ is continuous at $a$ for every closed ball $B(a,r)$ of radius $r>0$;
\item
[($4^o$)] {\em locally $L$-regular} if it is locally $L$-regular at every point of $K$.
\end{itemize}
This notion is stronger than the $L$-regularity as shown in \cite{Sa81} and  in \cite{Ce82}. It closely relates to the weighed extremal functions. Given 
a continuous function $\phi$ on $K$, the function $L_{K,\phi}$ is defined by
\[\label{eq:SZ-weighted}
	L_{K,\phi}(z) = \sup\{v(z): v \in \cL, v \leq \phi \text{ on } K\}.
\]
Siciak proved in \cite[Propositions~2.3,~2.16]{Siciak81} that the continuity of $L_{K,\phi}$ is equivalent to its continuity on $K$; and  if $K$ is locally $L$-regular, then $L_{K,\phi}$ is continuous. Conversely, Dieu showed in \cite{Dieu} that  if $L_{K,\psi}$ is continuous for every continuous weight $\psi$, then $K$ is locally $L$-regular. In Section~\ref{sec:local-regularity} we collected many well-known examples of the locally $L$-regular sets from \cite{Pl79, Pl84, Pl01} and \cite{Siciak97}. In particular, a compact domain whose boundary is $C^1$-smooth is locally $L$-regular.
Furthermore, we refer to the reader to the textbook by Klimek \cite[Chapter 5]{Kl91} for the detailed discussion and results up to 1990, and the surveys of Levenberg \cite{Le06} and Ple\'sniak \cite{Pl03} for more recent results. 

Next, the H\"older continuity of the extremal function has been studied extensively in relation to the Markov inequalities \cite{BC14}, \cite{BCEKNP17}, \cite{BiE16, BiE19} and \cite{Pl98}.
Let $K \subset \bC^n$ be a compact subset and $a\in K$. We say that
\begin{itemize}
\item[($5^o$)]
$K$ has {\em $\mu$-H\"older continuity property} ({\rm $\mu$-HCP} for short) {\em at $a$} if there exist constants $C>0$ and $\mu>0$ such that
$$
	L_K(z) \leq C \de^\mu, \quad |z-a| \leq \de\leq 1;
$$
\item
[($6^o$)] $K$ has {\rm $\mu$-HCP} if there exist constants $C>0$ and $\mu>0$ such that
$$
	L_K (z) \leq C \de^\mu, \quad {\rm dist}(z, K) \leq \de\leq 1.
$$
\end{itemize}
Notice in  ($6^o$) that if $L_K$ has {\rm $\mu$-HCP} on $K$, then $L_K$ has $\mu$-H\"older continuous on $\bC^n$ by \cite[Proposition~3.5]{Siciak97}. 
It was observed in \cite[Remark~3.7]{Siciak97} that in ($5^o$) and ($6^o$) the H\"older exponent satisfies $0<\mu \leq 1$. Moreover, in these two items it is sufficient to show the inequality  for $0<\de \leq \de_0$ with some constant $0<\de_0 \leq 1$.
The typical compact sets having $\mu$-HCP are convex sets with non-void interior either in $\bR^n\equiv \bR^n + i\cdot 0$, or in $\bC^n$.  A compact domain whose boundary is Lipschitz continuous has also $\mu$-HCP. These examples were contained in \cite{Siciak85}. Next, Paw\l ucki and Ple\'{s}niak \cite{PP86} showed that a very large class of uniformly polynomially cuspidal (UPC) sets have this property. The boundary of such sets may have cusp.  More recently, Sadullaev and Zeriahi \cite{SZ16} proved this property for a compact generic real submanifold of $\bC^n$. 

{\em Results.} Let $(X, \om)$ be a compact K\"ahler manifold of dimension $n$.
Let $PSH(X,\om)$ denote the space of $\om$-plurisubharmonic ($\om$-psh) functions, i.e.,
\[\notag
	PSH(X,\om) = \{v \in L^1(X): \om + dd^c v \geq 0\}.
\]
Note that if $c_2>c_1>0$, then $v\in PSH(X, c_1\om)$ implies $v\in PSH(X,c_2\om)$. If $\phi$ is quasi-psh, then $dd^c\phi+A\om\geq 0$ for some $A>0$. 
Our goal is to study the regularity (continuity and H\"older continuity) of 
the extremal functions associated to compact sets on the global setting. 
The Siciak-Zaharjuta extremal function of a Borel subset $E$ in $X$ is defined by
\[\label{eq:SZ-ext}
	V_E (z) = \sup\{ v(z): v \in PSH(X,\om) : v \leq 0 \quad\text{on } E\}.
\]
This function was introduced and studied early by Guedj and Zeriahi \cite[Chapter 9.4]{GZ17} in analogue with the one in $\bC^n$. For a real-valued continuous function $\phi$ on $X$,  the weighted extremal function is given by
\[\label{eq:w-SZ-ext}
	V_{E,\phi} (z) = \sup\{ v(z): v\in PSH(X,\om), \; v_{|_E} \leq \phi\}.
\]
(It is enough to assume $\phi$ is defined on $E$, however we are mainly interested in a compact set $E$ from which we can extend $\phi$ as a continuous function on the whole manifold.)
Berman, Boucksom and Witt-Nystrom  \cite{BBW11} showed that for many applications the continuity of $V_{E,\phi}$ is important, e.g., the Bernstein-Markov measure with respect to a plurisubharmonic weight $\phi$. 
Recently, Sadullaev \cite{Sa16} proved the equivalence between the global and local continuity of the extremal function for  $X=\bP^n$ with $\om= \om_{FS}$ the Fubini-Study metric.
Our first result extends this equivalence on any compact K\"ahler manifold.

\begin{thm}\label{thm:characterization-c} Let $K \subset X$ be a non-pluripolar compact subset and  $a\in K$. Let $B(a,r)\subset X$ denote a closed coordinate ball centered at $a$ with small radius $r>0$. 
\begin{itemize}
\item
[(a)] $V_K$ is continuous at $a$ if and only if  $V_{K \cap B(a,r)}$ is continuous at $a$.
\item
[(b)] Let $\phi\in C^0(X,\bR)$. If $V_{K}$ is continuous,  then $V_{K,\phi}$ is continuous.
\item
[(c)] $V_K$ is continuous if and only if $K$ is locally $L$-regular.
\end{itemize}
\end{thm}

Note that the sufficient condition in (a)  follows easily from the monotonicity. Unlike  in $\bC^n$ setting above the necessary condition holds on compact manifolds. This item extends the one in \cite{Sa16} whose proof relied on the facts that $\bP^n$ admits a large coordinate chart $\bC^n$ and on the relation between the extremal functions in $\bC^n$ and $\bP^n$. Here we  improve his proof so that it works for a general compact K\"ahler manifold. The item (b) is a consequence of (a), where  it is often the case that the modulus of continuity of the weighted extremal function is weaker than the unweighted one. The local $L$-regularity in (c) is understood in the sense of $(4^o)$, because  we may assume that the compact set is contained in a holomorphic coordinate chart,  which is biholomorphic to a ball in $\bC^n$, thanks to the characterization in (a).

Next, we wish to study the H\"older regularity of the global extremal functions.
When $K= X$, the weighted extremal function \eqref{eq:w-SZ-ext} becomes the envelope. It  has been well-understood. For example,  it is proved by Tosatti \cite{To18} that if $\phi$ is  smooth then its envelope has the optimal $C^{1,1}$-regularity (see also Berman \cite{B19}). More generally, if $\phi$ is $C^{0,\al}(X)$ for $0\leq \al \leq 1$, then the same regularity of the envelope  is proved in \cite{LPT} (see also \cite{ChZ19}, \cite{GLZ19}).
However, the problem becomes very different for a general compact subset, which is important for applications. This is our main focus. It follows from \cite{DMN} that the Siciak-Zaharjuta extremal function associated to a $C^2$-smooth compact domain is Lipschitz continuous. Later, this property is obtained for the extremal function associated to a smooth real generic submanifold in \cite{Vu18}.

We say that a compact subset $K$ has {\em a uniform density in capacity} if there exist constants $q>0$ and $\vka>0$ such that for every $a\in K$, 
\[\label{eq:cap-density} 
\inf_{0<r <1} \frac{cap(K \cap B(a,r))}{r^q} \geq \vka,
\]
where $cap(\bullet)$ is the Bedford-Taylor capacity on $X$. This property  holds for all compact sets  (Lemma~\ref{lem:cap-density}) that satisfy the local H\"older regularity version of $(5^o)$ and $(6^o)$ (Definition~\ref{defn:local-mu-hcp}).
 Furthermore, it is a local property and it holds for most of natural sets as we will see in Sections~\ref{sec:HCP}, \ref{sec:UPC}, \ref{sec:totally-real}. 
Our next result provides a general criterion to study the H\"older regularity.
\begin{thm}\label{thm:characterization-hcp} Let $K \subset X$ be a non-pluripolar compact subset and  $a\in K$. Let $B(a,r)\subset X$ denote a closed coordinate ball centered at $a$ with small radius $r>0$. Assume $\phi$ is a H\"older continuous function on $X$.
\begin{itemize}
\item
[(a)] $V_K$ is $\mu$-H\"older continuous  at $a$ if and only if $V_{K \cap B(a,r)}$ is $\mu$-H\"older continuous at $a$.
\item
[(b)] Assume $K$ has a uniform density in capacity of \eqref{eq:cap-density}.   If $V_K$ is H\"older continuous, then $V_{K,\phi}$ is  H\"older continuous.
\item
[(c)]  $V_K$ is H\"older continuous and $K$ has a uniform density in capacity if and only if  $K$ has locally H\"older continuous property of order $q$.
\end{itemize}
\end{thm}

The item (a) is a useful result. This combined with Demailly's regularization theorem allows us to prove the general sufficient condition in (b). Notice also that as in Theorem~\ref{thm:characterization-c}-(b) the H\"older exponent in (b) of the weighted extremal function is often smaller than the one of the unweighted extremal function.
The item (c) is a new and effective criterion (the locally H\"older continuous property will be given in Definition~\ref{defn:local-mu-hcp}). It can be applied to the previous works  \cite{DMN}, \cite{SZ16} and \cite{Vu18}, where the H\"older continuity of the (weighted) extremal functions were proved directly. The new input here is  a precise estimate on the H\"older norm/coefficient, which is the growth of sup-norm of $V_{E\cap B(a,r)}$ like $r^{-q}$ for $q>0$ and $r>0$ small in (c). 

To state the applications of the characterization of the H\"older continuity, we recall the notion  is introduced by Dinh, Ma and Nguyen \cite{DMN}. 

\begin{defn} \label{defn:DMN-regular}
 A non-pluripolar compact subset  $K\subset X$   is said to be  {\em $(\Cc^\al, \Cc^{\al'})$-regular}, where $0 < \al, \al' \leq 1$,  if for every $\al$-H\"older continuous weight $\phi$, its weighted extremal function $V_{K,\phi}$ is $\al'$-H\"older continuous.
\end{defn}
Obviously, if $K$ is  $(\Cc^\al, \Cc^{\al'})$-regular, then  $V_{K}$ is necessary H\"older continuous for the weight $\phi\equiv 0$.   As shown in \cite{DMN} one can  obtain the speed of convergence of Fekete's measures associated to  such a compact subset. There are more applications of such regular sets recently found in \cite{MV22}.
Thanks to Theorem~\ref{thm:characterization-hcp}-(b) and (c)  we are able to prove the $(\Cc^\al, \Cc^{\al'})$-regularity for many new examples in Section~\ref{sec:regularity}.  Among them the following subclass is contained in the class of UPC sets. This was somehow conjectured by Zeriahi \cite[page 562]{DMN}.

\begin{thm}\label{thm:UPC-intro}  A compact fat subanalytic subset in $\bR^n$ is $(\Cc^\al, \Cc^{\al'})$-regular.
\end{thm}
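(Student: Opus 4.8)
The plan is to reduce the $(\Cc^\al, \Cc^{\al'})$-regularity to the two quantitative hypotheses of Corollary~\ref{cor:continuity-W}-(b), namely a uniform density in capacity together with the H\"older continuity of $V_K$. By Theorem~\ref{thm:local-regularity}-(b) both of these follow at once from a single assertion: that a compact fat subanalytic set $K \subset \bR^n \subset \bP^n$ has the locally H\"older continuous property of some order $q$. Thus the entire argument becomes local and classical. It suffices to bound, on a coordinate ball $B(a,r)$ centered at each $a \in K$, the modulus of continuity and the sup-norm growth of the restricted extremal function $V_{K \cap B(a,r)}$ in terms of a fixed power of $r$, uniformly in $a$.

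The geometric input is that every compact fat subanalytic subset of $\bR^n$ is uniformly polynomially cuspidal (UPC): there exist a constant $M>0$ and integers $m, d$ such that for each $x \in K$ one can choose a polynomial arc $h_x \colon [0,1] \to \bR^n$ of degree $\leq d$ with $h_x(0) = x$ and $\mathrm{dist}(h_x(t), \bR^n \setminus K) \geq M\, t^m$ for all $t$. This is where subanalytic geometry enters: the \L ojasiewicz inequalities and the cell-decomposition and parametrization theory for subanalytic sets produce such cusps with constants uniform over $K$, while the fatness hypothesis guarantees that the interior is genuinely reached at the stated polynomial rate. I would invoke the Paw\l ucki--Ple\'sniak circle of results here rather than reprove the UPC property from scratch.

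Next I would run the Paw\l ucki--Ple\'sniak mechanism to pass from UPC to the locally H\"older continuous property with an explicit order. Given $z$ near $a$, one pulls a competing candidate $v \in \cL$ back along the complexified cusp $t \mapsto h_a(t)$; the one-variable H\"older estimate for the logarithmic extremal function on an interval, combined with the degree bound $d$ and the rate $M t^m$, yields an inequality of the form $V_{K \cap B(a,r)}(z) \leq C\, r^{-q}\, \mathrm{dist}(z, K)^\mu$ with $q, \mu>0$ depending only on $m, d, n$. The matching lower bound $cap(K \cap B(a,r)) \geq \vka\, r^q$ defining the uniform density in capacity likewise descends from the cusp: the polynomially cuspidal tongue contained in $K \cap B(a,r)$ is non-pluripolar with capacity bounded below by a power of $r$.

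The main obstacle is reconciling the two exponents, since the order $q$ emerging from the H\"older estimate must coincide with the $q$ appearing in the uniform density in capacity in order for Theorem~\ref{thm:local-regularity}-(b) to apply cleanly. I expect the delicate point to be tracking the dependence of all constants on $r$ uniformly in $a \in K$, and in particular ensuring that the cusp construction and its complexification degrade no worse than a fixed power of $r$ as the ball shrinks. Once the matched quantitative property of order $q$ is established, Theorem~\ref{thm:local-regularity}-(b) delivers both the H\"older continuity of $V_K$ and the uniform density in capacity, whereupon Corollary~\ref{cor:continuity-W}-(b) immediately upgrades these to $(\Cc^\al, \Cc^{\al'})$-regularity, completing the proof.
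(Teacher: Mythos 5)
Your overall route is the same as the paper's: fat subanalytic $\Rightarrow$ UPC with uniformly bounded coefficient norms (Paw\l ucki--Ple\'sniak) $\Rightarrow$ local HCP of some order $q$ $\Rightarrow$ H\"older continuity of $V_K$ plus uniform density in capacity via Theorem~\ref{thm:local-regularity}-(b) $\Rightarrow$ $(\Cc^\al,\Cc^{\al'})$-regularity via Corollary~\ref{cor:continuity-W}-(b). One side remark: your concern about reconciling the two exponents is a non-issue, since uniform density in capacity allows an arbitrary positive exponent (Lemma~\ref{lem:cap-density} derives it, of order $nq$, from local HCP of order $q$), and Theorem~\ref{thm:local-regularity}-(b) never requires the two exponents to coincide.

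The gap is in the key middle step, UPC $\Rightarrow$ local HCP of order $q$, which is exactly the paper's new technical content (Theorem~\ref{thm:UPC}). As you describe it --- pulling a candidate $v\in\cL$ back along the complexified cusp $t\mapsto h_a(t)$ and invoking the one-variable interval estimate --- the argument fails: the complexified curve is pluripolar, so bounding $v\circ h_a$ controls $v$ only on the curve and says nothing about $v(z)$ at the nearby points $z$ off the curve, which are precisely the points you must handle. The mechanism that works (in \cite{PP86} and in the paper) exploits the full thickness of the tongue $E_a=\bigcup_{0\le t\le 1} D(h_a(t),Mt^m)$: it is the image of an $(n+1)$-dimensional pyramid under the polynomial map $p(t,z)=h_a(t)+M(z_1^m,\dots,z_n^m)$, and one combines the inclusion $B(a,M\de^m)\subset p(P(\de))$, the pullback inequality of Lemma~\ref{lem:L-polynomial}, and the explicit HCP of the (convex) shrunken pyramid $S(r')$ from Corollary~\ref{cor:convex}. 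Relatedly, citing \cite{PP86} wholesale does not finish the job: that paper yields H\"older continuity of $L_K$ (HCP), but not the growth $C/r^{q}$ of the H\"older coefficient of $L_{K\cap B(a,r)}$ demanded by local HCP of order $q$; producing that uniform $r$-dependence (via the shrunken pyramid with $r'$ comparable to $r$, which is where the uniform bound on $\sum_{\ell}\|h_x^{(\ell)}(0)\|$ from \cite[Corollary~6.6]{PP86} enters) is precisely what the paper's proof of Theorem~\ref{thm:UPC} supplies and what your sketch leaves as an expectation.
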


Here a compact subanalytic subset $E$ of $\bR^n$ is fat if $E = \ov{{\rm int\, } E}$, and in general, these sets admit a cusp singularity. These subsets are fundamental objects in real algebraic geometry \cite{BM88}.  By invoking \cite{DMN} we obtain the speed of convergence of sequence of Fekete's measures associated to a compact fat subanalytic subset in $\bR^n$ (Theorem~\ref{thm:equidistribution}).

\bigskip
{\em Organization.} In Section~\ref{sec:c} we first provide basic properties of the extremal functions on a compact K\"ahler manifold. Then, we prove the characterizations on the continuity in Theorem~\ref{thm:characterization-c}.  In Section~\ref{sec:hcp}, we start with the study of the local uniform density in capacity \eqref{eq:cap-comparison} and recall the fundament approximation theorem of Demailly. The crucial result is Lemma~\ref{lem:property}. Next, we complete the proof of characterizations of H\"older continuity (Theorem~\ref{thm:characterization-hcp}). Section~\ref{sec:regularity} is devoted to study many examples of local $L$-regularity and local $\mu$-HCP compact sets in $\bC^n$. The local $\mu$-HCP of compact fat subanalytic sets in Theorem~\ref{thm:UPC-intro} is a consequence of Theorem~\ref{thm:characterization-hcp}-(c) and Corollary~\ref{cor:fat-analytic}. Section~\ref{sec:app} contains some applications related to the speed of convergence for sequences of associated measures with Fekete points of  fat subanalytic sets  in $\bR^n$. Lastly, Section~6 consists of an appendix which justifies the reduction to working on compact subsets of $\bC^n$ in $\bP^n$ for the H\"older continuous results.

\bigskip
{\em Acknowledgement.} I would like to thank W. Ple\'sniak for providing me many copies of his papers including the one \cite{PP86}. These have been very valuable resources for many questions investigated here.
I would also like to thank  S.\,Ko\l odziej for reading the manuscripts and giving many useful comments. I am benefited from a careful reading of Hyunsoo Ahn. The author is very grateful to the referee who has read the paper very carefully and given many valuable suggestions. Thanks to the suggestions the exposition of the paper is significantly improved. The author is partially supported by  the National Research Foundation of Korea (NRF) grant  no. 2021R1F1A1048185.

\section{Continuity of the extremal functions}\label{sec:c}

\subsection{Basic properties of the global extremal functions}
In this section we recall basic results related to the  extremal functions and weighted extremal functions on a compact K\"ahler manifold $(X,\om)$ of dimension $n$. These results are the analogues of the classical results in \cite{Siciak81, Siciak82}, \cite{Kl91} on the Siciak-Zaharjuta extremal function on $\bC^n$. The detailed proofs are contained in \cite{GZ17}.  
The first one is \cite[Theorem~9.17]{GZ17}.
\begin{prop}\label{prop:basic} Let $E \subset X$ be a Borel set. Then,
\begin{itemize}
\item
[(a)] $E$ is pluripolar $\Leftrightarrow$ $\sup_X V_E^* \equiv +\infty$ $\Leftrightarrow$ $V_E^*\equiv +\infty.$
\item
[(b)] If $E$ is not pluripolar, then $V_E^* \in PSH(X,\om)$. Moreover, $V_E^* \equiv 0$ in the interior of $E$.
\end{itemize}
\end{prop}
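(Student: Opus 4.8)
The plan is to treat this as the standard dichotomy for the global extremal function, in the circle of ideas of \cite{GZ17}. I will freely use two basic facts about $PSH(X,\om)$ on a compact K\"ahler manifold: that the set $\{v \in PSH(X,\om): \sup_X v = 0\}$ has uniformly bounded $L^1$-norms, quantified by the estimate $\sup_X v \le \frac{1}{V}\int_X v\,\om^n + C_0$ valid for every $v \in PSH(X,\om)$, where $V = \int_X \om^n$ and $C_0$ depends only on $(X,\om)$; and that the usc regularization of a uniformly-bounded-above family of $\om$-psh functions is again $\om$-psh. Set $\cF = \{v \in PSH(X,\om): v \le 0 \text{ on } E\}$, so $V_E = \sup_{v\in\cF} v$. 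One has $\sup_X V_E = \sup_X V_E^*$ since $V_E \le V_E^*$ and $V_E^*(x) = \limsup_{y\to x} V_E(y) \le \sup_X V_E$; and by Choquet's lemma there is a countable subfamily whose increasing maximal envelope $u$ satisfies $u^* = V_E^*$, with each partial maximum lying in $\cF$.

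For part (a) I would first dispatch the easy implications. If $E$ is pluripolar, pick $\psi \in PSH(X,\om)$, $\psi \not\equiv -\infty$, with $E \subset \{\psi = -\infty\}$ and $\sup_X \psi = 0$; then $\psi + C \in \cF$ for all $C>0$, so $V_E = +\infty$ on the dense set $\{\psi > -\infty\}$ and hence $V_E^* \equiv +\infty$. The step $V_E^* \equiv +\infty \Rightarrow \sup_X V_E^* = +\infty$ is trivial. The substantive direction is $\sup_X V_E = +\infty \Rightarrow E$ pluripolar, which I would prove by an explicit pluripolar certificate: choose $v_j \in \cF$ with $M_j := \sup_X v_j \ge j\,2^j$, put $u_j := v_j - M_j \le 0$, so $\sup_X u_j = 0$ and $u_j \le -M_j$ on $E$, and set $\phi := \sum_{j\ge 1} 2^{-j} u_j$. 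Since $\ddbar u_j \ge -\om$, the partial sums obey $\ddbar(\sum_{j\le N} 2^{-j}u_j) \ge -\om$; and the uniform estimate gives $\int_X(-u_j)\,\om^n \le V C_0$, so $\sum_j 2^{-j}\|u_j\|_{L^1} \le V C_0 < \infty$, the series converges in $L^1$, and therefore $\phi \not\equiv -\infty$ lies in $PSH(X,\om)$. On $E$, however, $\phi \le -\sum_j 2^{-j} M_j \le -\sum_j j = -\infty$, so $E \subset \{\phi = -\infty\}$ is pluripolar. This closes the three-way equivalence.

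For part (b), non-pluripolarity together with (a) gives $\sup_X V_E = M < +\infty$, so every member of $\cF$ is bounded above by $M$; the increasing envelope $u$ from Choquet's lemma is then a uniformly-bounded-above increasing limit of $\om$-psh functions, whence $V_E^* = u^* \in PSH(X,\om)$. For the interior statement, the constant $0 \in \cF$ forces $V_E \ge 0$ on all of $X$, while $v \le 0$ on $E$ for every $v \in \cF$ forces $V_E \le 0$ on $E$; hence $V_E \equiv 0$ on $E$. If $a \in \mathrm{int}\,E$, a whole neighborhood of $a$ lies in $E$, so $V_E \equiv 0$ there and $V_E^*(a) = \limsup_{z\to a} V_E(z) = 0$.

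The only genuine obstacle is the substantive direction of (a); everything else is monotonicity and regularization bookkeeping. The point to get right there is that the two uniform inputs on $PSH(X,\om)$ are precisely what make $\phi$ simultaneously $L^1$ (hence a bona fide $\om$-psh function, not identically $-\infty$) and identically $-\infty$ on $E$ — the tension between the summability $\int_X(-u_j)\,\om^n \le V C_0$ and the divergence $\sum_j 2^{-j} M_j = +\infty$ forced by the choice $M_j \ge j\,2^j$. Since the paper defers to \cite[Theorem~9.17]{GZ17}, in the write-up I would simply cite these two facts and present the series argument above.
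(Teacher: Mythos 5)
Your proof is correct in all essentials, but note that the paper offers no argument of its own for this proposition: it simply cites \cite[Theorem~9.17]{GZ17}. What you have written is precisely the standard proof behind that citation --- the compactness estimate $\sup_X v \le \frac{1}{V}\int_X v\,\om^n + C_0$ for $\om$-psh functions, Choquet's lemma for part (b), and the weighted series $\phi = \sum_j 2^{-j}(v_j - M_j)$ with $M_j \ge j2^j$ as the pluripolarity certificate for the substantive direction. The tension you point out (summability of the $L^1$-norms versus divergence of $\sum_j 2^{-j}M_j$ on $E$) is exactly the engine of the argument, and your verification of it is sound.

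One caveat is worth flagging. Your implication ``$E$ pluripolar $\Rightarrow V_E^* \equiv +\infty$'' starts by choosing a single $\psi \in PSH(X,\om)$ with $E \subset \{\psi = -\infty\}$, i.e., it takes \emph{pluripolar} to mean \emph{globally} pluripolar. The paper's convention appears to be the local one: elsewhere (proof of Lemma~\ref{lem:equivalence-notions}, and again in Section~6.2) it treats the passage from a pluripolar set to a global $\om$-psh potential as a nontrivial cited result, namely \cite[Theorem~12.5]{GZ17}, the Josefson-type theorem that locally pluripolar subsets of a compact K\"ahler manifold are globally pluripolar. Under that convention your first implication needs this citation; it cannot be made elementary, since local-to-global pluripolarity is genuinely deep (it goes back to \cite{GZ05} via capacity theory). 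Your series argument is unaffected --- it certifies \emph{global} pluripolarity, which implies the local notion --- so once that one citation is inserted, the three-way equivalence closes exactly as you describe.
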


Next, we have \cite[Proposition~9.19]{GZ17} for the basic properties. 
\begin{prop}\label{prop:elementary} \mbox{}
\begin{enumerate}
\item[(a)]
If $E\subset F$, then $V_E \leq V_F$.
\item[(b)]
If $E$ is an open subset, then $V_E = V^*_{E}$.
\item[(c)]
If $P \subset X$ is  pluripolar, then $V_{E\cup P}^* = V_E^*$.
\item[(d)]
Let $\{E_j\}$ be an increasing sequence of  sets in $X$ and $E := \cup E_j$, then
$\lim_{j\to \infty}V_{E_j}^* = V_E^*.$
\item[(e)]
Let $\{K_j\}$ be a decreasing sequence of compact sets in $X$ and $K:= \cap K_j$, then $\lim_{j\to \infty} V_{K_j} = V_K$. Furthermore, $\lim_{j\to \infty} V_{K_j}^*  = V_K^*$ a.e.
\end{enumerate}
\end{prop}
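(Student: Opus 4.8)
The plan is to deduce all five items from the definition of $V_E$ together with Proposition~\ref{prop:basic} and a few standard envelope facts, the genuinely nontrivial content being the two monotone-convergence statements. Items (a) and (b) are essentially formal. For (a), if $E\subset F$ then any $v\in PSH(X,\om)$ constrained to be $\leq 0$ on one set is automatically constrained on the other, so comparing the two defining families of competitors gives the monotonicity immediately. For (b), note that $0$ is always a competitor while every competitor is $\leq 0$ on $E$, so $V_E\equiv 0$ on the open set $E$ and is in particular continuous there; off $E$ one always has $V_E\leq V_E^*$, and since $V_E^*\in PSH(X,\om)$ by Proposition~\ref{prop:basic} with $V_E^*=0$ on $E$, the regularization $V_E^*$ is itself a competitor, forcing $V_E^*\leq V_E$ and hence equality.

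For (c) I would use that the pluripolar set $P$ lies in $\{\psi=-\infty\}$ for some $\psi\in PSH(X,\om)$ normalized by $\psi\leq 0$. One inequality is monotonicity. For the other, given any competitor $v$ for $V_E$ and any $\eps>0$, the function $v+\eps\psi$ is $\om$-plurisubharmonic, is $\leq 0$ on $E$, and equals $-\infty$ on $P$, so it competes for $V_{E\cup P}$; hence $v+\eps\psi\leq V_{E\cup P}$. Letting $\eps\to 0$ gives $v\leq V_{E\cup P}$ off $\{\psi=-\infty\}$, and taking the supremum over $v$ followed by upper regularization yields $V_E^*\leq V_{E\cup P}^*$.

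Part (d) is the first substantial step. Writing $E_j\uparrow E$, monotonicity makes $V_{E_j}^*$ a decreasing sequence bounded below by $V_E^*$, so its limit $\phi$ is $\om$-plurisubharmonic with $\phi\geq V_E^*$. For the reverse inequality, $V_{E_k}\leq 0$ on $E_k$ forces $V_{E_k}^*\leq 0$ quasi-everywhere on $E_k$, and since $\phi\leq V_{E_k}^*$ we obtain $\phi\leq 0$ quasi-everywhere on $E=\cup_k E_k$. Discarding the exceptional pluripolar set $N$ and invoking part (c) to identify $V_{E\setminus N}^*$ with $V_E^*$, the function $\phi$ becomes admissible, and we conclude $\phi=\phi^*\leq V_E^*$, hence equality. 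This is the step I expect to be the main obstacle: the bound on $\phi$ along $E$ survives only quasi-everywhere, so turning $\phi$ into a genuine competitor really requires the pluripolar-invariance of (c) together with Choquet's lemma to keep the relevant suprema and their regularizations $\om$-plurisubharmonic.

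Finally, for (e) with $K_j\downarrow K$, monotonicity gives an increasing sequence $V_{K_j}\leq V_K$ with limit $\psi\leq V_K$, and I would obtain the reverse inequality from compactness alone: for a competitor $v$ of $V_K$ and $\eps>0$, the open set $\{v<\eps\}$ contains $K=\cap_j K_j$, so $K_j\subset\{v<\eps\}$ for all large $j$ by the finite-intersection property; then $v-\eps\leq 0$ on $K_j$, so $v-\eps\leq V_{K_j}\leq\psi$, and $\eps\to 0$ followed by the supremum over $v$ gives $V_K\leq\psi$. This proves $\lim_j V_{K_j}=V_K$ pointwise. The remaining assertion $\lim_j V_{K_j}^*=V_K^*$ almost everywhere then follows from the standard fact that for an increasing sequence of $\om$-plurisubharmonic functions the supremum agrees with its own upper regularization off a pluripolar set.
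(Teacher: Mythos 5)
The first thing to note is that the paper contains no proof of this proposition at all: it is quoted verbatim from \cite[Proposition~9.19]{GZ17}. So your attempt can only be measured against the standard argument, and in outline you reproduce it correctly. Item (b) (openness forces $V_E^*=0$ on $E$, so $V_E^*$ is itself a competitor) is right; item (d) is indeed the delicate step, and you handle it the standard way: the decreasing limit $\phi$ is $\le 0$ only quasi-everywhere on $E$, and admissibility is restored by discarding a pluripolar set and invoking (c), with the Bedford--Taylor theorem \cite{BT82} that negligible sets are pluripolar doing the real work. Item (e) is the usual finite-intersection-property argument; for the final a.e.\ statement your one-line justification should be completed by observing that $u:=\sup_j V_{K_j}^*$ satisfies $V_K\le u\le V_K^*$ (the lower bound pointwise from the first part, the upper bound by monotonicity), hence $u^*=V_K^*$, and then $\{u<u^*\}$ is pluripolar, so $u=V_K^*$ off a pluripolar (hence null) set.

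Two points need correction. First, in (c) the function $v+\epsilon\psi$ is \emph{not} $\omega$-plurisubharmonic: one only has $dd^c(v+\epsilon\psi)\ge -(1+\epsilon)\omega$, and on a compact manifold $\psi$ cannot be genuinely plurisubharmonic, so this cannot be waved away. Replace it by the convex combination $(1-\epsilon)v+\epsilon\psi\in PSH(X,\omega)$, which is $\le 0$ on $E$ and $\equiv-\infty$ on $P$; this is exactly the trick the paper itself uses in the proof of Lemma~\ref{lem:equivalence-notions} and again in Section~6, and the rest of your argument (letting $\epsilon\to 0$ off $\{\psi=-\infty\}$, then regularizing) goes through unchanged. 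Second, in (a) you never commit to a direction: the inclusion of competitor families gives $V_F\le V_E$ when $E\subset F$ (the larger set has the smaller extremal function). Be aware that the proposition as printed asserts $V_E\le V_F$, which is backwards --- a typo in the paper, as its own display \eqref{eq:monotonicity} and every later use of monotonicity confirm; indeed your own arguments in (d) and (e) silently use the correct direction ($V_{E_j}^*$ decreasing as $E_j$ increases, $V_{K_j}$ increasing as $K_j$ decreases), so you should state it explicitly rather than prove, as written, a false inequality.
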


We will frequently need the following result for compact K\"ahler manifolds. 

\begin{lem} \label{lem:lower-semicontinuity} If $K\subset X$ is a compact subset, then $V_K$ is lower semi-continuous. 
\end{lem}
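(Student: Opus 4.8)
The plan is to realize $V_K$ as the upper envelope of a family of \emph{continuous} $\om$-subharmonic competitors, and then invoke the elementary fact that the pointwise supremum of an arbitrary family of continuous functions is lower semi-continuous. The content of the lemma lies precisely here: the defining family $\{v \in PSH(X,\om): v \leq 0 \text{ on } K\}$ consists of upper semi-continuous functions, so its supremum is not a priori lower semi-continuous, and one must show that the envelope is unchanged upon restricting to continuous competitors. Concretely, set
\[
V_K^{c}(z) = \sup\{v(z) : v \in PSH(X,\om)\cap C^0(X),\ v\leq 0 \text{ on } K\}.
\]
Trivially $V_K^c \leq V_K$ and, being a supremum of continuous functions, $V_K^c$ is lower semi-continuous; hence it suffices to prove $V_K \leq V_K^c$, i.e.\ that every $\om$-psh competitor $v$ is a pointwise limit of continuous competitors.

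First I would apply Demailly's regularization theorem to a given competitor $v$: this yields a decreasing sequence of smooth functions $v_j \downarrow v$ on $X$ with $v_j \in PSH(X,(1+\delta_j)\om)$ for some $\delta_j \downarrow 0$. The constraint must then be restored. Put $m_j = \sup_K v_j = \max_K v_j$, which is decreasing in $j$. A Dini-type argument on the compact set $K$ shows $m_j \downarrow \sup_K v \leq 0$: extracting a maximizing sequence $x_j \in K$ with $v_j(x_j)=m_j$ and a subsequential limit $x_*\in K$, the continuity of each fixed $v_k$ forces $v_k(x_*) \geq \lim_j m_j$ for all $k$, hence $v(x_*) \geq \lim_j m_j$, which is impossible unless $\lim_j m_j = \sup_K v$. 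Now define
\[
u_j = (1+\delta_j)^{-1}\bigl(v_j - \max(m_j,0)\bigr).
\]
Each $u_j$ is smooth and belongs to $PSH(X,\om)$ (dividing a $(1+\delta_j)\om$-psh function by $1+\delta_j$ and subtracting a constant preserves the $\om$-psh condition), and on $K$ one has $v_j \leq m_j$, so $u_j \leq (1+\delta_j)^{-1}(m_j - \max(m_j,0)) \leq 0$. Thus each $u_j$ is an admissible continuous competitor for $V_K^c$.

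Finally, since $\delta_j \to 0$ and $\max(m_j,0) \to \max(\sup_K v,0) = 0$, we obtain $u_j(z) \to v(z)$ for every $z \in X$, whence $v(z) \leq V_K^c(z)$. Taking the supremum over all $\om$-psh competitors $v$ gives $V_K \leq V_K^c \leq V_K$, so $V_K = V_K^c$ is lower semi-continuous. I expect the main obstacle to be the interplay between Demailly's regularization, which only produces $(1+\delta_j)\om$-psh functions, and the need to simultaneously preserve the $\om$-psh condition and the normalization $v \leq 0$ on $K$; the rescaling by $(1+\delta_j)^{-1}$ together with the convergence $m_j \downarrow \sup_K v \leq 0$ is exactly what reconciles these two requirements, and verifying $m_j \downarrow \sup_K v$ is the one place where compactness of $K$ is genuinely used.
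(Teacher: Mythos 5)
Your proposal is correct and follows essentially the same route as the paper: both realize $V_K$ as a supremum of \emph{continuous} $\om$-psh competitors via Demailly's regularization, using a Dini/Hartogs-type argument on the compact set $K$ to restore the constraint $v\leq 0$ on $K$, and then conclude by lower semi-continuity of suprema of continuous functions. The only difference is cosmetic: the paper invokes the loss-free version of the regularization (smooth $v_j\in PSH(X,\om)$ decreasing to $v$, citing B\l ocki--Ko\l odziej) and subtracts a small constant $\de$, whereas you work with the lossy version $v_j\in PSH(X,(1+\delta_j)\om)$ and repair it by the rescaling $u_j=(1+\delta_j)^{-1}\bigl(v_j-\max(m_j,0)\bigr)$, which is a valid substitute.
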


\begin{proof} Let $v \in PSH(X, \om)$ and $v\leq 0$ on $K$. By Demailly's regularization theorem \cite[Proposition~3.8]{De94} (see also \cite{BK07}) there exists a sequence of smooth function $v_j \in PSH(X,\om) \cap C^\infty(X)$ decreasing to $v$. Fix $\de>0$. By Hartogs' lemma we have for $j\geq j_0$,
$$
	v_j - \de \leq 0 \quad \text{on } K.
$$
Consequently, $V_K$ is supremum of a family of continuous functions, and the result follows.
\end{proof}

This gives a simple criterion to check the continuity of the extremal functions.

\begin{cor}\label{cor:semicontinuity} If  $K$ is a compact subset and $V_K^* \equiv 0$ on $K$, then $V_K$ is continuous on $X$.
\end{cor}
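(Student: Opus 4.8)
The plan is to deduce the corollary directly from Lemma~\ref{lem:lower-semicontinuity} together with Proposition~\ref{prop:basic}. The guiding observation is that $V_K$ is always dominated by its upper semicontinuous regularization $V_K^*$, and $V_K$ is already lower semicontinuous by the lemma; so it suffices to establish the reverse inequality $V_K^* \leq V_K$. Once this is in hand, $V_K = V_K^*$ is simultaneously upper and lower semicontinuous, hence continuous on all of $X$.

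First I would note that the hypothesis forces $K$ to be non-pluripolar: if $K$ were pluripolar, then Proposition~\ref{prop:basic}(a) would give $V_K^* \equiv +\infty$, contradicting the assumption $V_K^* \equiv 0$ on the nonempty set $K$. Consequently Proposition~\ref{prop:basic}(b) applies and $V_K^* \in PSH(X,\om)$.

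Next I would use the assumption $V_K^* \equiv 0$ on $K$ to recognize $V_K^*$ as an admissible competitor in the supremum defining $V_K$. Indeed, $V_K^*$ is an $\om$-plurisubharmonic function satisfying $V_K^* \leq 0$ on $K$, so by the very definition of the extremal function $V_K^*(z) \leq V_K(z)$ for every $z \in X$. Combined with the trivial inequality $V_K \leq V_K^*$, this yields $V_K = V_K^*$ on $X$.

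Finally, $V_K^*$ is upper semicontinuous by construction, while $V_K$ is lower semicontinuous by Lemma~\ref{lem:lower-semicontinuity}; since the two functions coincide, $V_K$ is continuous on $X$. There is essentially no difficult step here—the only point requiring care is to check that the hypothesis makes $V_K^*$ genuinely admissible (that is, $\om$-plurisubharmonic and $\leq 0$ on $K$), which is precisely where the non-pluripolarity extracted from Proposition~\ref{prop:basic} enters.
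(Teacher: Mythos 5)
Your proof is correct and follows essentially the same route as the paper: both arguments use Proposition~\ref{prop:basic} to get $V_K^* \in PSH(X,\om)$, observe that the hypothesis makes $V_K^*$ an admissible competitor so that $V_K^* \leq V_K$, conclude $V_K = V_K^*$, and combine upper semicontinuity of $V_K^*$ with Lemma~\ref{lem:lower-semicontinuity}. Your write-up merely spells out the non-pluripolarity check and the trivial inequality $V_K \leq V_K^*$, which the paper leaves implicit.
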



\begin{remark}\label{rmk:regularity-c} The continuity of $V_K$ is a property of the set itself, i.e., it is independent of reference K\"ahler metric $\om$.  Indeed, assume $\om'$ is another K\"ahler metric and $K$ is regular with respect to $\om$. There exists  $A>0$ such that $\om' \leq A \om$. It follows that 
$
	 V_{\om';K}^* \leq A V_{\om;K}^*.
$ 
Hence, the continuity of $V_{\om';K}$ follows from Corollary~\ref{cor:semicontinuity}.
Similarly, the independency holds also for the H\"older continuity of $V_K$ as in Lemma~\ref{lem:property} below, where the H\"older exponent is  independent of the reference metric too. For this reason if there is no confusion then we only write $V_K$ for the extremal function with respect to a fixed K\"ahler metric.
\end{remark}

The "zero-one" relative extremal function for a Borel set $E\subset X$ is given by
\[\label{eq:r-ext}
	h_E (z) := \sup\left\{ v(z): v\in PSH(X,\om),\; v\leq 1, \quad v_{|_E}\leq 0\right\}.
\]
We know that $0\leq h_E^* \leq 1$ and $h_E^* \in PSH(X,\om)$. Furthermore,  $E$ is pluripolar if and only if $h_E^* \equiv 1$ (see \cite[page 620]{GZ05}).  

For non-pluripolar compact set $K$, let us denote  $M_K := \sup_{X}V_K^*<+\infty$. Then, it is easy to see that 
\[\label{eq:zero-one-function}
	V_K^* \leq M_K h_K^*.
\]
We will need an estimate of $M_K$ in term of capacity.  Recall that for a Borel set $E\subset X$, the Bedford-Taylor capacity \cite{Ko05}  is defined by
\[\label{eq:BT-cap}
	cap(E) = \sup\left\{ \int_E (\om + dd^cv)^n: v\in PSH(X,\om), \, -1 \leq v \leq 0 \right\}.
\]
(Here $cap = cap_\om$, we omit  the subscript $\om$ if it is already fixed and there is no confusion arises). Normalizing the metric $\om$ so that 
$$
	\int_X \om^n =1.
$$
Thus, for every compact subset $K\subset X$,
$cap (K) \leq 1.$
By \cite[Lemmas~12.2, 12.3]{GZ17} we know that there exists a uniform constant $A$ such that for every compact subset $K$, 
\[ \label{eq:cap-comparison} 
	\frac{1}{[cap(K)]^\frac{1}{n}} \leq \sup_X V_K \leq \frac{A}{cap(K)}.
\]

Next, we consider the weighted extremal function 
$$
	V_{K,\phi} (z) = \sup\{ v(z): v\in PSH(X,\om), \; v_{|_K} \leq \phi\},
$$
where $\phi$ is a real-valued continuous function on $X$. 
If $E \subset F$ be non-pluripolar compact subsets and $\phi \leq \psi$ are continuous functions, then we have the following monotonicity 
\[ \label{eq:monotonicity}
	V_{F, \phi} \leq V_{E,\phi} \leq V_{E, \psi}.
\]
Another useful property is 
\begin{lem}\label{lem:weight-vs-unweight} Let $K\subset X$ be a non-pluripolar compact subset and $\phi$ a continuous function on $X$.
\begin{itemize}
\item
[(a)] $V_K^* + \inf_K \phi \leq V_{K,\phi}^* \leq V_K^* + \sup_K \phi.$
\item
[(b)] Let $\te = \om + dd^c \phi$ and
$
	V_{\te;K} = \sup\{v \in PSH(X, \te): v_{|_K} \leq 0\}.
$
Then,
$$
	V_{K, \phi} = V_{\te;K} + \phi,
$$
\item
[(c)] $V_{K,\phi}$ is continuous if and only if $V_{K,\phi}^* \leq \phi$ on $K$.

\end{itemize}
\end{lem}
\begin{proof} The items (a) and (b) follow immediately from the definitions of $V_K$ and $V_{K,\phi}$. To prove (c) let us write $V:= V_{K,\phi}$. Since $K$ is compact, Demailly's regularization theorem \cite[Proposition~3.8]{De94} (see also \cite{BK07}) implies also that $V$ is lower semi-continuous. The conclusion follows easily from the definition as $V=V^*$. 
\end{proof}

\begin{remark}
Notice that there are more general relative extremal functions studied by Di Nezza and Lu \cite{DiL15} which is the global version of the weighted extremal function introduced by Bedford \cite{Be87} and used for the subextension problem by Cegrell, Ko\l odziej and Zeriahi \cite{CKZ}. The relative weighted extremal  functions in $\bC^n$ was also studied in Alan' thesis \cite{Al-thesis}.
\end{remark}

In the special case $X = \bP^n$ with $\om= \om_{FS}$ the Fubini-Study metric,  there is  a 1-1 correspondence between $PSH(\bP^n, \om)$  and $\cL(\bC^n)$ (see e.g. \cite{GZ17}). 
Let us denote $\rho = \frac{1}{2}\log (1+ |z|^2)$ the local potential of  $\om$ on $\bC^n \subset \bP^n$, i.e.,
$$
	\om = dd^c \rho \quad\text{on } \bC^n.
$$ 
Recall that for a compact set  $E\subset \bC^n$, 
\[ \notag
	L_{E,\rho} (z)= \sup \left\{f(z): f \in \cL,\; f_{|_E} \leq \rho\right\}.
\]
Hence, 
\[\label{eq:identity-LV}
	L_{E,\rho} (z) =V_{E} (z) + \rho (z), \quad z\in \bC^n.
\]
Similar to Lemma~\ref{lem:weight-vs-unweight}-(a) we have
\[\label{eq:weight-vs-unweight}
	L_E^* + \inf_E \rho \leq L_{E, \rho}^* \leq L_E^* +\sup_E\rho.
\]

\subsection{Characterizations of the continuity}
\begin{proof}[Proof of Theorem~\ref{thm:characterization-c}]
(a)
Since $V_F^* \leq V^*_E$ for Borel sets $E \subset F$, the  sufficient condition is obvious. Next, we prove the necessary condition.
Assume $V_K^*(a) = 0$, we  need to show that  $V_{K\cap B}^*(a)=0$, where $ B := B(a,r)$ is a closed coordinate ball centered at $a$ with small radius $r>0$. Indeed, let us 
consider the positive relative extremal function defined in \eqref{eq:r-ext},
$$
	h_{K \cap  B}(z) = \sup\left\{ v(z): v \in PSH(X,\om),\; v_{|_{K\cap  B}} \leq 0, v \leq 1\right\}.
$$
Then, $h^*_{K\cap B} \in PSH(X,\om)$ with $0\leq h_{K\cap B}^* \leq 1$.

Let $0 \leq \chi \leq  1$ be a smooth function on $X$ such that $\chi(a)=0$ and $\chi \equiv 1$ on $X\setminus B(a,r)$. We have
\[\label{eq:cutoff-fct}
	\|\chi\|_{C^1(X)} \leq c_1 /r, \quad \|\chi\|_{C^2(X)} \leq c_2 / r^2,
\]
where $c_1, c_2$ are uniform constants independent of $a$ and $r$, the norms $\|\cdot\|_{C^1(X)}$ and $\|\cdot\|_{C^2(X)}$ are the usual $C^1$ and $C^2$ norms on $X$.
Hence, there exists $0< \veps \leq 1/2$, which is a small multiple of $r^2$, such that $-\veps \chi$ belongs to $PSH(X, \om/2)$.
Given $u \in PSH(X,\om)$ satisfying $u \leq 1$ and $u\leq 0$ on $K \cap  B$, we define 
$$
	\vphi(z) = \veps u(z) - \veps \chi(z).
$$
Then, $\vphi(z) \leq 0$ on $K$ and it is $\om$-psh. It follows from definition of $V_K$ that
$	\vphi(z) \leq V_{K}(z).
$
Taking supremum over all such $u$ we get 
\[\label{eq:basic-ineq}
	\veps  h^*_{K\cap  B}(z) - \veps \chi(z)  \leq V_{K}^*(z).
\]
Since $V_K^*(a) =0= \chi(a)$, we have by \eqref{eq:zero-one-function} that $h_{K\cap  B}^*(a) =0$. In particular,  $K \cap B$ is non-pluripolar. Denote  $M := \sup_X V_{K\cap  B}^*$. It follows from \eqref{eq:cap-comparison} that 
 $$ 1\leq M  \leq \frac{A}{cap(K \cap B)} < +\infty$$
for a uniform constant $A$. Using \eqref{eq:zero-one-function} again we get $V_{K\cap B}^* \leq M h^*_{K\cap B}$. So, $V_{K\cap B}^*(a) =0$ and the proof  of (a) follows.

(b) Assume $V_K$ is continuous and we write $V:= V_{K,\phi}$. Thanks to  Lemma~\ref{lem:weight-vs-unweight}-(c) we need to show that $V^*\leq \phi$ on $K$. In fact, let $a\in K$ and $\veps'>0$. We can choose $r>0$ so small that $\phi(x) \leq \phi(a) + \veps'$ in $B(a,r)$. By monotonicity \eqref{eq:monotonicity},
 we have
$$
	V \leq V_{K\cap B(a,r), \phi(a)+ \veps'} = \phi(a)+\veps' + V_{K\cap B(a,r)} \quad\text{on } X.
$$
Since $V_K$ is continuous, $V_{K\cap B(a,r)}$ is continuous at $a$ by (a). Hence $V^*(a) \leq \phi(a)+\veps'$. Let $\veps'\to 0$, we get $V^* (a)\leq \phi (a)$. The proof is completed.

(c) It follows from (a) that we may assume that $K$ is contained in a closed holomorphic chart  in $X$ which is biholomorphic to the closed unit ball in $\bC^n$. Since the continuity of the extremal function is a property of the set and it is invariant under biholomorphic maps, without loss of generality, we assume $K\subset\bC^n \subset \bP^n$ equipped with the Fubini-Study metric.

Assume that $K$ is locally $L$-regular at $a\in K$.
The argument in the proof of (b) applied for $L_K$ shows that 
the local $L$-regularity implies that $L_{K,\rho}$ is continuous on $\bC^n$ which was first proved in  \cite[Proposition~2.16]{Siciak81}. Hence,  $V_K$ is continuous by the identity \eqref{eq:identity-LV} and Corollary~\ref{cor:semicontinuity}.

Conversely, assume $V_K$ is continuous. Let $a\in K$ and $B_r:= B(a,r) \subset \bC^n$ be a closed ball of radius $r>0$. It follows from (a) that $V_{K\cap B_r}$ is continuous at $a$. The identity \eqref{eq:identity-LV} implies that $L_{K \cap  B_r,\rho}$ is continuous at $a$ for every $r>0$. Hence, by definition of the weighted extremal function in \eqref{eq:SZ-weighted},
$$
	L_{K\cap B_r,\rho} \leq \rho \quad\text{on } E.
$$
 Fix a ball $B:=B(a,r_0)$. Using \eqref{eq:weight-vs-unweight} we have, for every $0<r <r_0$,
$$
	 L_{K\cap  B}^*(a) \leq L_{K\cap  B_r}^*(a) \leq  L_{K\cap  B_r,\rho}^*(a) - \inf_{K\cap  B_r} \rho \leq \rho(a) -\inf_{K\cap  B_r} \rho.
$$
Since $\rho$ is continuous on $\bC^n$, letting $r\to 0$ we conclude that $L_{K\cap  B}^*(a) =0$. This means that $L_{K\cap B}$ is continuous at $a$. Since $a$ is arbitrary the proof of (c) is completed.
\end{proof}

It would be also interesting to know the equivalence between global and local regularity of weighted extremal functions. Let $\phi \in C^0(X,\bR)$ and let $B(a,r)\subset X$ be a closed coordinate ball centered at $a$ with small radius $r>0$. It is easy to see that $V_{K\cap B(a,r),\phi}$ is continuous at $a$, then $V_{K,\phi}$ is continuous at $a$. 

\begin{ques}\label{q:sa} Assume $V_{K,\phi}$ is continuous at $a$.   Is  $V_{K\cap B(a,r),\phi}$ is continuous at $a$?
\end{ques}

If $X = \bP^n$ with  the Fubini-Study metric $\om$, then for a bounded Borel set $E\subset \bC^n$,
$$
	L_{E, \phi+\rho} (z)= V_{E,\phi}(z) +  \rho (z), \quad z\in \bC^n.
$$
Sadullaev obtained the positive answer to Question~\ref{q:sa}  in \cite[Theorem~2.4]{Sa16} provided that  $\psi := \phi+\rho$ is  a $C^2$-smooth strictly psh  function in  $\cL(\bC^n)$. He also asked in \cite[Problem~2.8]{Sa16} whether it is enough to assume $\psi$ is strictly psh in a neighborhood of $K$. We have a positive answer for his question in the case  $\psi = \phi +\rho$, where $\phi$ is either $C^2$-smooth or continuous quasi-psh on $\bP^n$. This is also a partial answer for the above question.

\begin{lem}\label{lem:loc-w-c} Let $\phi \in C^0(X,\bR)$. Let $K\subset X$ be compact and $a\in K$. Assume $\phi$ is quasi-psh on $X$, i.e., 
$$\te= dd^c \phi + A\om \geq \om$$
on $X$ in the weak sense  for some constant $A>0$. Then, $V_{K,\phi}$ is continuous at $a$ if and only if $V_{K \cap B(a,r),\phi}$ is continuous at $a$.
\end{lem}

\begin{proof} Using the relation between the weighted and unweighted extremal functions in Lemma~\ref{lem:weight-vs-unweight}, for every Borel set $E$,
$$
	V_{E,\phi} = V_{\te; E} + \phi.
$$
Hence, $V_{E,\phi}^* = V_{\te;E}^* + \phi$. Notice that $\te$ may not be smooth, but it is strictly positive whose potential is continuous. Consequently, the Bedford-Taylor capacity $cap_\om$ and $cap_\te$  (see \eqref{eq:BT-cap} above) are equivalent to each other and the inequality \eqref{eq:zero-one-function} holds true. Furthermore, Lemma~\ref{lem:lower-semicontinuity}, Corollary~\ref{cor:semicontinuity} are valid for $PSH(X,\te)$. Hence, the argument in the proof in Theorem~\ref{thm:characterization-c}-(a) still holds in $PSH(X,\te)$. It shows that $V_{\te; K}$ is continuous at $a$ if and only if $V_{\te;K\cap B(a,r)}$ is continuous at $a$. This finished the proof.
\end{proof}

As a consequence we have the converse direction of Theorem~\ref{thm:characterization-c}-(b) for special weights.

\begin{cor}\label{cor:loc-w-c} Assume $\phi$ is continuous quasi-psh on $X$. If $V_{K,\phi}$ is continuous, then   $V_K$ is continuous.
\end{cor}

\begin{proof} By assumption there exists a constant $A>0$ such that $\te= A\om + dd^c \phi \geq \om$. Then, $$
	V_{K,\phi} = V_{\te; K} + \phi.
$$
It follows that the continuity of $V_{K,\phi}$ is equivalent to the one of $V_{\te;K}$. By  Lemma~\ref{lem:loc-w-c}, $V_{\te;K\cap B(a,r)}$ is continuous at $a\in K$ for every closed coordinate ball $B(a,r)$. Hence, using the proof of Theorem~\ref{thm:characterization-c}-(b), we get that  $V_{\te;K, -\phi}$ is continuous. Moreover, using one more time the relation
$$
	V_{\te;K, -\phi} = V_{A\om; K} + \phi,
$$
we get the continuity of $V_{A\om;K}$ and that of $V_{K}$.
\end{proof}

\section{H\"older continuity of the extremal functions}\label{sec:hcp}

\subsection{Local uniform density in capacity} Let $K\subset X$ be a compact subset in a compact K\"ahler manifold $(X,\om)$ of dimension $n$. 
First, we show that this property is equivalent to having the control of sup-norm of the extremal function on coordinate balls.  
Assume that $K$ has a uniform density in capacity \eqref{eq:cap-density}, i.e., there exist constants $q>0, \ka>0$ such that for every $a\in K$,
$$
\inf_{0<r <1} \frac{cap(K \cap B(a,r))}{r^q} \geq \vka.
$$
By the second inequality in \eqref{eq:cap-comparison} for the compact set $K\cap B(a,r)$
$$
	\sup_X V_{K \cap B(a,r)} \leq  \frac{A}{\vka \, r^q}, \quad  0< r <1.
$$
Conversely, if we have the control  $\sup_X V_{K \cap B(a,r)} \leq A / r^q$ for $0<r<1$, where $A, q>0$ are uniform constants, then $K$ has uniform density in capacity, i.e.,
$$
	\frac{cap(K \cap B(a,r))}{r^{q/n}} \geq \frac{1}{A^n}, \quad  0<r<1.
$$

Secondly, by definition the uniform density with capacity is a local property. We will see that  it can be verified by using the Bedford-Taylor capacity \cite{BT82} in a local coordinate. Without loss of generality we may assume that $K$ is contained in a coordinate unit ball $\Om$. Then, by considering its image in that chart, 
we may assume that  $$K  \subset  B(0,1/2) \subset \Om:= B(0,1) \subset \bC^n.$$ By equivalence between the global Bedford-Taylor capacity and the local one \cite[Eq. (6.2)]{Ko05}, $K$ has a uniform density in capacity if and only if 
\[\label{eq:cap-density-loc}
	\frac{cap'(K \cap  B(a,r), \Om)}{r^q} \geq \vka, \quad 0<r<1,
\]
where for a Borel subset $E\subset \Om$, 
$$
	cap'(E, \Om) = \sup\left\{ \int_E (dd^c u)^n : u \in PSH(\Om), -1 \leq u \leq 0\right\}.
$$
Then, by applying the comparison between two capacities \cite{AT84} (see also \cite[Theorem~2.7]{Ko05}), which is the local version of  \eqref{eq:cap-comparison},  the condition \eqref{eq:cap-density-loc} is equivalent to the existence of uniform constant $A, q'>0$ such that for every $a\in K$ and  $0<r<1$,
$$
	\sup_{\Om} L_{K\cap  B(a,r)} \leq \frac{A}{r^{q'}},
$$
where $L_{K\cap B(a,r)}$ is defined as in \eqref{eq:SZ-classic}.
More precisely, if $\sup_\Om L_{K \cap  B(a,r)} \leq A/r^q$, then 
\[\label{eq:cap-density-sufficient-loc}
	\frac{cap'(K\cap B(a,r),\Om)}{r^{nq}} \geq \frac{1}{A^n}.
\]
\begin{remark}  To verify the uniform density in capacity in \eqref{eq:cap-density} or \eqref{eq:cap-density-loc}  it is enough to have  a uniform $0<r_0 \leq 1$ satisfying
$$
	\inf_{0<r <r_0} \frac{cap(K \cap B(a,r))}{r^q} \geq \vka.
$$
Then, by monotonicity of the capacity we get the whole range $0<r<1$.
\end{remark}

As in the continuous case it will be shown in Theorem~\ref{thm:characterization-hcp}-(a) (see the appendix for the detail) that it is no loss of generality to consider compact subsets $K\subset \bC^n \subset \bP^n$ for the H\"older continuous case. In this setting,  the uniform density in capacity can be seen from the H\"older continuity of $L_K$. Recall that the Lelong class in $\bC^n$ is given by
\[\label{eq:L-class}\notag
	\cL = \left\{ f \in PSH(\bC^n):  f (z) - \rho(z) < c_f\right\},
\]
where $\rho = \frac{1}{2}\log (1+ |z|^2)$. For a  non-pluripolar compact subset $E$ in $\bC^n$,
$$
	L_E(z) := \sup \left\{f(z): f \in \cL,\;  f_{|_E} \leq 0\right\}.
$$
The modulus of continuity of $L_E$ at $a\in E$ is given by  
$$
	\vpi_E'(a, \de) = \sup_{|z-a| \leq \de} L_E(z)
$$
for $0< \de \leq 1$. Then, $L_E$ is continuous at $a$ if and only if 
$\lim_{\de\to 0} \vpi'_E(a,\de) =0$. 
Put
$$
	\vpi_E'(\de) = \sup\{ \vpi_E'(a,\de): a\in E\}
$$
which is the modulus of continuity of $L_E$ over $E$. 
It is a well-known fact due to B\l ocki \cite[Proposition~3.5]{Siciak97} that the modulus of continuity of $L_E$ on $E$ controls the modulus of continuity of $L_E$ on $\bC^n$. Namely,
\[\label{eq:local-modulus-of-continuity}
	|L_E(z) - L_E(w)| \leq \vpi'_E(|z-w|), \quad z,w\in \bC^n, \; |z-w|\leq 1.
\]
Inspired from the definition of local $L$-regularity in \cite{Siciak81}  we introduce 

\begin{defn}\label{defn:local-mu-hcp} Let $q\geq 0$ be an integer and  $0< \mu \leq 1$. Let $B(a,r)$ be a closed ball with center at $a$, of radius $r>0$. We say that  a compact subset $K\subset \bC^n$  has
\begin{itemize}
\item
[(a)]  locally $\mu$-H\"older continuity property (local {\rm $\mu$-HCP} for short) of order $q$ at $a\in K$ if there exist constants  $C>0$ and $0< r_0 \leq 1$  such that 
$$
	\vpi_{K\cap B(a,r)}' (a,\de) \leq \frac{C  \de^\mu}{r^q}, \quad 0<\de\leq 1,\, 0< r <r_0;
$$
\item
[(b)] local $\mu$-HCP of order $q$ if it has local $\mu$-HCP of order $q$ at every point $a\in K$ for the constants $C, r_0$ being  independent of $a$.
\end{itemize}
\end{defn}
Here it is important to know both the H\"older exponent and the H\"older coefficient. 
A weaker property is the HCP, introduced in $(5^o)$ and $(6^o)$, where  it only requires the H\"older continuity of $L_K$. 
The  definition implies an important property 
\begin{lem} \label{lem:cap-density} Let $K \subset \bC^n$ be a non-pluripolar compact subset. If $K$ has local {\rm $\mu$-HCP} of order $q$,  then it has a uniform density in capacity in the sense of \eqref{eq:cap-density-loc}.
\end{lem}

\begin{proof} 
Without loss of generality we may assume that $K \subset \Om:=B(0,1)$. 
By the assumption there exist uniform constants $q>0$ and $C, r_0>0$ such that for every $a\in K$, 
$$
	\vpi_{K \cap B(a,r)}' (a,\de) \leq \frac{C \de^\mu}{r^q}, \quad 0<\de \leq 1, \, 0<r<r_0.
$$
Let us fix $\de =1$. It follows that, for $0<r<r_0$,
$$
	\sup_{\Om} L_{K\cap B(a,r)} (z) \leq C/r^q.
$$
It follows from the inequality \eqref{eq:cap-density-sufficient-loc} that
$$
	\frac{cap'(K\cap B(a,r),\Om)}{r^{nq}} \geq \frac{1}{C}.
$$
This finished the proof.
\end{proof}
Noting that these local {\rm $\mu$-HCP} properties will be used in Section~\ref{sec:regularity}.

\subsection{Demailly's approximation theorem.} This is the main technical tool for proving the characterizations of the H\"older continuity. In the proof of the continuous case (Lemma~\ref{lem:lower-semicontinuity}) an elementary approximation sequence in \cite{BK07} was enough. However, in the H\"older continuous case we need to know precisely the negative part of complex Hessian matrices that is lost in the regularization sequence. To this end we will need the approximation sequence due to Demailly \cite{De94}.

Let $u$ be a bounded $\om$-psh function on $X$.
Considering $\Psi_{\delta }u$ the regularization of the $\omega$-psh function $u$ defined  by

\[\label{eq:phie}
\Psi_\delta u(z)=\frac{1}{\delta ^{2n}}\int_{\zeta\in T_{z}X}
u({\exp}_z(\zeta))\chi\Big(\frac{|\zeta|^2_{\omega }}{\delta ^2}\Big)\,d\la(\zeta),\ \delta>0;
\]
where $\zeta \mapsto {\exp}_z(\ze): T_zX \to X$ is the Riemannian  exponential map, $d\la$ denote the Lebesgue measure on the Hermitian space $(T_zX, \om(z))$  and the smoothing kernel 
 $\chi: \mathbb R_{+}\rightarrow\mathbb R_{+}$  is given by
$$\chi(t)=\begin{cases}\frac {\eta}{(1-t)^2}\exp(\frac 1{t-1})&\ {\rm if}\ 0\leq t\leq 1,\\0&\
{\rm if}\ t>1\end{cases}$$
 with a suitable constant $\eta$, such that
\[\notag
\int_{\mathbb C^n}\chi(\Vert z\Vert^2)\,dV_{2n}(z)=1
\]
($dV_{2n}$ being the Lebesgue measure in $\mathbb C^n$).
By the explanation in \cite[page 624]{DDGHKZ} the proof in \cite[Lemma~4.1]{KN19} holds for the exponential function $\exp_z$ in the place of its holomorphic part ${\exp}h_z$. The proof itself  was originated from \cite[Proposition~3.8]{De94}  and \cite[Lemma~1.12]{BD12}.  We restate the result here.

\begin{lem}\label{lem:kis} Let $u\in PSH(X,\om)\cap L^\infty(X)$.
 Define the Kiselman-Legendre transform with level $b>0$ by
 \begin{equation}\label{kisleg}
 U_{\delta, b}= \inf _{ t\in [0,\delta ]}(\Psi_{t }u + c_1t^2 + c_1t -b \log\frac{t}{\delta }),
 \end{equation}
Then, there exist positive constants $c_0, c_1$ depending only on $X,\om$ such that 
\begin{itemize}
\item[(i)]
$\Psi_{t }u+c_1t^2$ is increasing in $t$, 
\item
[(ii)] 
$
\omega+dd^c  U_{\delta,b }\geq -(c_0b+2c_1\delta)\,\omega.
$
\end{itemize}
\end{lem}
Notice that this function is different by a constant $c_1\de^2 + c_1\de$ compared with the one considered there, which was was negative. This term is of order $O(\de)$ that will not affect to the proof of the H\"older continuity.  The properties $(i)$ and $(ii)$ tell us that the convolution-like $\Psi_\de u$ produces a sequence of quasi-psh functions, or more precisely, they are $[1+ O(\de)]\om$-psh. Furthermore, they are comparable in the uniform norm with the usual convolution.
By \cite[Remark~4.6]{De94} 
\[\label{eq:convoution-compare}
\Psi_\de u = u*\chi_\de + O(\de^2)
\] for $0<\de<\de_0$ for a small $\de_0>0$ in the normal coordinate system center at $z$, where $\{\chi_\de\}_{\de>0}$ is the standard radial smooth kernel family. In this coordinate system $(U, z)$ with $z: U\to\Om \subset \bC^n$ a biholomorphic, we also consider
the average value over an Euclidean $\de$-ball of $u$
\[\label{eq:av-loc}	\cw u_\de (z) = \frac{1}{v_{2n} \de^{n}} \int_{B_\de(z)} u (x) dV_{2n}(x),
\]
where  $B_\de (z) = \{x \in \bC^n: |z-x| < \de\}$. Then, it follows from  \cite[Eq. (5.11), (5.12)]{KN23}  that there exists a uniform constant $C>0$ such that in $\Om_\de = \{z\in \Om: {\rm dist}(z,\d\Om) >\de\}$
\begin{align} \label{eq:cov-reg1}
	u *\chi_\de - u &\leq  C (\cw u_\de  -u), \\
	\cw u_{\de/2}  -u &\leq C (u *\chi_\de - u) \label{eq:cov-reg2}.
\end{align}
Combining estimates \eqref{eq:convoution-compare}, \eqref{eq:cov-reg1} and \eqref{eq:cov-reg2} and the line of argument given in \cite[Lemma~4.2]{GKZ08} (see also \cite[Theorem~3.2]{Z21}) we arrive at

\begin{cor}\label{cor:test-h} Let $u\in PSH(X,\om) \cap L^\infty(X)$. Assume there exist uniform constants $C, \de_0>0$ and $0<\mu \leq 1$ such that 
$$
	\Psi_\de u - u \leq C\de^\mu \quad \text{on } X, \quad 0<\de \leq \de_0.
$$Then $u$ is $\mu$-H\"older continuous on $X$.
\end{cor}

\subsection{Characterizations of H\"older continuity}
First, we will show that the modulus of continuity of $V_K$ on $X$ is equivalent to the one on $K$ only. This is a generalization of a well-known result of B\l ocki \cite[Proposition~3.5]{Siciak97}.  For simplicity we only prove it for the H\"older continuity case.

Let $K\subset X$  be a compact subset and $a \in K$.  For $0<\de \leq 1$ the modulus of continuity of $V_K$ at $a$ is given by
\[\notag
	\vpi_K(a,\de):= \sup_{|z-a| \leq \de} V_K(z).
\]
Then the modulus of continuity of $V_K$ on $K$ is given by
\[\label{eq:mod-cpt}
	\vpi_K(\de) = \sup \{\vpi_K(a,\de): a\in K\}.
\]

\begin{lem}\label{lem:property} Let $K\subset X$ be a non-pluripolar compact subset and $0<\mu \leq 1$. Then,
 $V_K$ is $\mu$-H\"older continuous on $X$  if and only if  it is $\mu$-H\"older continuous on $K$, i.e., there exists $0<\de_0 \leq 1$ such that for every $0<\de \leq \de_0$,
$$	V_K (z) \leq C \de^\mu, \quad {\rm dist}(z, K) \leq \de.
$$
\end{lem}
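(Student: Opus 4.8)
The statement is an equivalence, but one direction is immediate: if $V_K$ is $\mu$-Hölder continuous on all of $X$, then in particular its restriction to the $\de$-neighborhood of $K$ obeys the claimed bound, using that $V_K \equiv 0$ on $\mathrm{int}\,K$ and more generally $V_K \le 0$ on $K$ (so for $\mathrm{dist}(z,K)\le\de$ one picks $a\in K$ realizing the distance and applies the global Hölder estimate to $|z-a|\le\de$). The content of the lemma is therefore the converse: \emph{from} the boundary estimate $\sup_{\mathrm{dist}(z,K)\le\de} V_K(z) \le C\de^\mu$ I must propagate Hölder control to the entire manifold.

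The plan is to follow the Błocki-type argument adapted to the compact Kähler setting. The key device is a \emph{comparison with shifted sup-convolutions / translates} of $V_K$. For a small vector $h$ (or more intrinsically, a small displacement along the flow of a holomorphic/real coordinate vector field in finitely many charts covering $X$), I would form a regularized translate $V_{K,h}$ of $V_K$, arrange via a Kiselman-type or sup-convolution correction that $V_{K,h} - A|h|^{\mu}$ is again $\om$-plurisubharmonic (absorbing the failure of translation-invariance of $\om$ into the constant $A$, which is legitimate because $\om$ is smooth and $X$ is compact), and then show this competitor lies $\le 0$ on $K$ up to the controlled error $C|h|^\mu$ coming precisely from the hypothesis on the $\de$-neighborhood. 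Since $K$ is non-pluripolar, $V_K = V_K^* \in PSH(X,\om)$ is bounded (Proposition~\ref{prop:basic} and the normalization $M_K=\sup_X V_K^*<\infty$); the corrected translate is then an admissible competitor in the definition of $V_K$, giving $V_{K,h}-A|h|^\mu - C|h|^\mu \le V_K$ on $X$, i.e. $V_K(z+h) - V_K(z) \le (A+C)|h|^\mu$. Interchanging the roles of $z$ and $z+h$ yields the two-sided Hölder estimate globally.

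Concretely the steps are: (1) fix a finite atlas of coordinate balls and a subordinate partition of unity so that translations make sense; (2) in each chart write $\om = dd^c\rho$ for a smooth strictly $\om$-psh local potential (as in Lemma~\ref{lem:equivalence-notions}) and show that translating $V_K+\rho$ by $h$ costs at most $A|h|$ in the potential, hence $(V_K)_h - A|h|$ remains $\om$-psh after gluing; (3) control the boundary defect of the competitor on $K$ by the hypothesis, observing that a point $z$ with $z+h\in K$ has $\mathrm{dist}(z,K)\le|h|$, so $V_K(z+h)\le C|h|^\mu$ is exactly what the assumption delivers; (4) apply the maximality/comparison for the global envelope to conclude $V_K(z+h)\le V_K(z)+(A+C)|h|^\mu$ everywhere.

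The main obstacle is step (2)–(3), i.e. the gluing: translation is not a global operation on $X$, so the competitor $V_{K,h}$ must be built chartwise and patched so that it is genuinely $\om$-plurisubharmonic across chart overlaps while still lying below the prescribed Hölder barrier on $K$. The standard fix is a $\max$-gluing against a slightly lowered copy of $V_K$ near the chart boundaries (exactly the $\max\{\,\cdot\,,V_{K_\de}\}$ mechanism used in the proof of Lemma~\ref{lem:equivalence-notions}), which forces agreement with $V_K$ outside a fixed coordinate neighborhood and thereby eliminates the boundary ambiguity. Getting the error terms from the potential correction, the translation defect, and the boundary hypothesis to combine into a \emph{single} constant times $|h|^\mu$ — uniformly over the finite atlas and over all $a\in K$ — is where the care lies, but each ingredient is a controlled $O(|h|)$ or $O(|h|^\mu)$ term on the compact manifold, so the bookkeeping closes.
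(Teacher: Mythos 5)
Your reduction to the converse direction and your steps (3)--(4) (use the translate as a competitor in the envelope, bound it on $K$ by the hypothesis) are the right skeleton, and in $\bC^n$ this is exactly B\l ocki's argument behind \eqref{eq:local-modulus-of-continuity}. But on a compact $X$ your step (2), the chartwise patching, has a genuine gap that the proposed $\max$-gluing cannot repair. To glue $\max\{(V_K)_h - A|h|^\mu,\; V_K - \eps\}$ across $\d\Om$ you must know that the first entry lies below the second near $\d\Om$, i.e.
$$
V_K(z+h) - V_K(z) \;\le\; A|h|^\mu - \eps \qquad \text{near } \d\Om,
$$
and this is precisely the global modulus-of-continuity estimate you are trying to prove --- the argument is circular. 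The gluing in Lemma~\ref{lem:equivalence-notions} is not of this type: there the entry being glued is a relative-extremal candidate bounded by $1$, and the local potential $\rho$ has a fixed positive lower bound $m$ on $\d\Om$, so the needed inequality near $\d\Om$ comes with a gap $-\veps$ that is \emph{independent of the function}; for translates of $V_K$ no such function-independent gap exists. The only a priori bound is $(V_K)_h - V_K \le 2\sup_X V_K$, which forces $\eps \sim \sup_X V_K$ and destroys the H\"older estimate; nor can you choose the atlas to avoid this, since $K$ may meet the boundary region of every chart.

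The paper's proof avoids translations entirely. It first notes that the hypothesis forces $V_K^* \equiv 0$ on $K$, hence $V := V_K$ is continuous; it then uses Demailly's regularization $\rho_t V$ --- a globally defined, chart-free operation built from the exponential map --- together with Kiselman's minimum principle, setting $V_{\de,b} = \inf_{t\in[0,\de]}\bigl(\rho_t V + c_1 t^2 + c_1 t - b\log(t/\de)\bigr)$ with $b = A\de^\mu$, so that $V_\de := V_{\de,b}/(1+\de^\mu)$ is $\om$-psh on all of $X$ with no patching needed. The hypothesis bounds $V_\de$ by $c_2\de^\mu$ on $K$, the envelope property then gives $V_\de \le V + c_2\de^\mu$ on $X$, and a lower bound $t_0(z) \ge \kappa\de$ on the point where the infimum is attained converts this into $\rho_{\kappa\de}V - V \le C\de^\mu$, i.e. global $\mu$-H\"older continuity. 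In short, what your outline is missing is a global substitute for translation, and that substitute (Demailly's $\rho_t V$ plus the Kiselman correction) is the actual content of the paper's proof, not an optional convenience.
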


\begin{proof} The necessary condition is obvious. To prove the sufficient condition, we use the regularization theorem of Demailly (Lemma~\ref{lem:kis}). Notice from \eqref{eq:mod-cpt} that the H\"older continuity of $V_K$ on $K$ is equivalent to 
$$	\vpi_K(\de) \leq C \de^\mu
$$
for every $0<\de \leq \de_0$, where $C$ is a uniform constant and $\de_0>0$ is a fixed small constant. 
In particular,   $V_K^* \equiv 0$ on $K$.  Therefore, $V:=V_K$ is continuous (Corollary~\ref{cor:semicontinuity}). Consider the regularization $\Psi_t V$  of $\om$-psh function $V$ as in \eqref{eq:phie}  and
$$
	V_{\de, b} (z) = \inf_{[0,\de]} \left(\Psi_t V(z) + c_1 t^2 + c_1 t - b \log \frac{t}{\de}\right).
$$
From Lemma~\ref{lem:kis} we know that  $\Psi_tV + c_1t^2$ is increasing in $t$ and
$$
	\om + dd^c V_{\de, b} \geq -(c_0 b + 2c_1 \de) \om.
$$
Here $c_0, c_1$ are uniform constants depending only on $X$ and $\om$. 
Consider $b = A\de^\mu $ for $A>0$ so that $c_0 b+ 2c_1\de = \de^\mu$. Then
\[\label{eq:modify}
	V_\de = \frac{V_{\de,b}}{1 + \de^\mu} \in PSH(X,\om).
\]
Moreover, for $a\in K$,
\[\label{eq:bound1}\begin{aligned}
	(1+ \de^\mu) V_\de(a)
&\leq  \Psi_\de V(z) + c_1 \de^2 + c_1\de \\
&\leq \sup_{|z-a|\leq \de}	V(z) + c_1 \de + c_1\de^2 \\
&\leq \vpi_K(\de)+ c_1 \de +c_1\de^2 \\
&\leq c_2 \de^\mu,
\end{aligned}\]
where in the last inequality  we  used the fact that $V$ is $\mu$-H\"older continuous on $K$ and $c_2$ is a uniform constant.
Moreover, $V\geq 0$ on $X$, so we have $V_{\de}(z) \geq 0$. Therefore,
$$
	V_\de(a)  \leq c_2\de^\mu \quad \text{for } a\in K.
$$
This combined with the $\om$-psh property in \eqref{eq:modify} and the definition of $V$ gives
\[\label{eq:bound2}
	V_\de (z) \leq V(z) + c_2\de^\mu \quad\text{for } z\in X.
\]
At this point we can conclude the H\"older continuity of $V$ on $X$  as in the argument in \cite{DDGHKZ}. Since our setting is quite different, 
we give all details for the reader's convenience.

Let us fix  a point $z\in X$, then minimum in the definition of $V_{\delta,b}(z)$  is realized for  some   $t_0 = t_0 (z)$. By \eqref{eq:modify} and \eqref{eq:bound2} we have
\[\notag
	(1+ \delta^\mu) (\Psi_{t_0} V + c_1t_0^2 + c_1t_0 - b \log \frac{t_0 }{\delta} - V) \leq c_2\delta^\mu.
\]
Since $\Psi_t V + c_1t^2 + c_1t - V \geq 0$, we have
\[\notag 
	b (1 + \delta^\mu) \log\frac{t_0}{\delta} \geq - c_2 \delta^\mu.
\]
Combining this with $b = A\de^\mu $, one gets that
$$
	t_0(z) \geq \delta \kappa \quad \mbox{ for } \kappa = \exp \left(- \frac{2Ac_2}{(1+ \delta_0^\mu)}\right),
$$
where $\delta_0$ is already fixed at the beginning, and therefore $\kappa$  is a  uniform constant.
Since  $t \mapsto \Psi_t V + c_1t^2$ is increasing and $t_0 :=t_0 (z)\geq \delta \kappa$, 
\[\begin{aligned} \notag
	\Psi_{\kappa \delta} V(z)  + c_1 (\delta \kappa)^2 + c_1(\delta \kappa) - V(z) 
&	\leq \Psi_{t_0} V (z) + c_1t_0^2 + c_1 t_0 -V (z)  \\
&	= V_{\delta,b} (z) - V (z) \\
&	= \frac{\delta^\mu}{1-\delta^\mu} V_\delta  + (V_\delta -V), 
\end{aligned}\]
where the last equality follows from \eqref{eq:modify}. Combining this and \eqref{eq:bound2}  we get that
$
	\Psi_{\kappa \delta} V (z) - V(z) \leq C \delta^{\mu}. 
$
Rescaling $\delta := \kappa \delta$ and increasing the uniform constant $C$ implies
\[\label{eq:bound3}
\Psi_\de V(z) - V(z) \leq C \de^{\mu}, \quad 0<\de \leq \de_0
\]
(shrinking $\de_0$ if necessary). By Corollary~\ref{cor:test-h} the H\"older continuity of $V$ follows.
\end{proof}

\begin{remark} For the projective space $\bP^n$ equipped  with   the Fubini-Study metric,  Lemma~\ref{lem:property} answered a question of Sadullaev \cite[Problem~2.13]{Sa16}.
\end{remark}

We have now  the weighted version.
\begin{prop} \label{prop:W}
Let $K\subset X$ be a non-pluripolar compact set and let $\phi$
 be  H\"older continuous. Then, $V_{K,\phi}$ is H\"older continuous on $X$ if and only if it is H\"older continuous on $K$, i.e., for every $z\in X$ with ${\rm dist}(z, K) \leq \de \leq \de_0$, 
$$ V_{K,\phi} (z) \leq \phi(z) + C \de^\mu,
$$
where $C$ and $0<\de_0\leq 1$ are  uniform constants.
\end{prop}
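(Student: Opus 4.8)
The plan is to treat (a) as the weighted analogue of Corollary~\ref{cor:semicontinuity} and (b) as the weighted analogue of Lemma~\ref{lem:property}, reusing the Demailly regularization machinery almost verbatim. In both items the easy implication comes first. Since every competitor $v$ in the definition of $V_{K,\phi}$ satisfies $v\le\phi$ on $K$, taking the pointwise supremum gives $V_{K,\phi}\le\phi$ on $K$; so if $V_{K,\phi}$ is continuous then $V_{K,\phi}^*=V_{K,\phi}\le\phi$ on $K$, which is the forward direction of (a). For the forward direction of (b) I would take a nearest point $a\in K$ to $z$, use $V_{K,\phi}(a)\le\phi(a)$ together with the assumed global H\"older bound, and then use the H\"older continuity of $\phi$ to pass from $\phi(a)$ back to $\phi(z)$.

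For the reverse direction of (a) I would first establish that $V_{K,\phi}$ is lower semicontinuous, exactly as in Lemma~\ref{lem:lower-semicontinuity}: given a competitor $v$, Demailly-regularize it to a decreasing sequence $v_j\in PSH(X,\om)\cap C^\infty$, and for each fixed $\de>0$ run the compactness argument on $K$ (the open sets $\{v_j<\phi+\de\}$ increase, by monotonicity of $v_j$, and cover $K$ since $v_j\downarrow v\le\phi$), yielding $v_j-\de\le\phi$ on $K$ for $j$ large; thus $V_{K,\phi}$ is a supremum of continuous admissible functions. Since $K$ is non-pluripolar, Lemma~\ref{lem:weight-vs-unweight}(a) bounds $V_{K,\phi}^*$, so $V_{K,\phi}^*\in PSH(X,\om)$. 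If now $V_{K,\phi}^*\le\phi$ on $K$, then $V_{K,\phi}^*$ is itself an admissible competitor, whence $V_{K,\phi}^*\le V_{K,\phi}$; combined with $V_{K,\phi}\le V_{K,\phi}^*$ and lower semicontinuity this forces $V_{K,\phi}=V_{K,\phi}^*$ to be continuous.

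For the reverse direction of (b), the local hypothesis already gives $V_{K,\phi}^*\le\phi$ on $K$, so by (a) the function $V:=V_{K,\phi}$ is continuous, hence a bounded element of $PSH(X,\om)$. After replacing $\phi$ by $\phi+c$ (which shifts $V_{K,\phi}$ by the same constant and preserves both hypotheses) I may assume $\phi\ge0$. I then run the construction \eqref{eq:modify}--\eqref{eq:bound2} on $V$: form $V_{\de,b}$ and $V_\de=V_{\de,b}/(1+\de^\mu)$ with $b=A\de^\mu$. The only place the weight enters is the estimate on $K$: for $a\in K$ and $|z-a|\le\de$ one has ${\rm dist}(z,K)\le\de$, so the hypothesis gives $V(z)\le\phi(z)+C\de^\mu$ and the H\"older continuity of $\phi$ gives $\phi(z)\le\phi(a)+C\de^\mu$; hence $\sup_{|z-a|\le\de}V(z)\le\phi(a)+c\de^\mu$, and as in \eqref{eq:bound1} this yields $(1+\de^\mu)V_\de(a)\le\phi(a)+c_2\de^\mu$. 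Using $\phi\ge0$ to bound $\phi(a)/(1+\de^\mu)\le\phi(a)$ gives $V_\de\le\phi+c_2\de^\mu$ on $K$; since $V_\de-c_2\de^\mu\in PSH(X,\om)$ it is an admissible competitor, so $V_\de\le V+c_2\de^\mu$ on all of $X$, which is the weighted form of \eqref{eq:bound2}. From here the argument of Lemma~\ref{lem:property} applies unchanged: the minimizer $t_0(z)$ satisfies $t_0\ge\kappa\de$ (using that $V$ is bounded), monotonicity of $t\mapsto\rho_tV+c_1t^2$ gives $\rho_{\kappa\de}V(z)-V(z)\le C\de^\mu$, and rescaling $\de\mapsto\kappa\de$ yields the $\mu$-H\"older continuity of $V$ on $X$.

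The main obstacle is the competitor step in (b): to convert the estimate on $K$ into the global inequality $V_\de\le V+c_2\de^\mu$ one needs $V_\de-c_2\de^\mu$ to lie below $\phi$ on $K$, and the factor $(1+\de^\mu)^{-1}$ produced by the Demailly construction does not cooperate with a sign-indefinite weight; the normalization $\phi\ge0$ is exactly what makes $\phi(a)/(1+\de^\mu)\le\phi(a)$ and thereby salvages the argument. The remaining subtlety is the lower semicontinuity statement underlying (a), where Hartogs' lemma must be run against the continuous weight $\phi$ rather than against $0$, but this is a routine modification of Lemma~\ref{lem:lower-semicontinuity}.
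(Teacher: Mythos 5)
Your proof is correct and takes essentially the same route as the paper: Demailly regularization yields lower semicontinuity of $V_{K,\phi}$ and hence (a), and (b) is the weighted rerun of the construction in Lemma~\ref{lem:property}, with the local estimate on $K$ converted into the global inequality $V_\de \le V_{K,\phi} + c_2\de^\mu$ via the competitor property. The only cosmetic difference is in handling the factor $(1+\de^\mu)^{-1}$: you normalize $\phi \ge 0$ by adding a constant, while the paper absorbs the discrepancy into a term $\de^\mu(\|V_\de\|_{L^\infty}+C)$ using boundedness of $V_{K,\phi}$; both are equally valid.
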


\begin{proof} Let us write $V:= V_{K,\phi}$. The necessary condition easily follows from the H\"older continuity of $V$ and the characterization Lemma~\ref{lem:weight-vs-unweight}-(c). The proof of the sufficient condition  is very similar to the one in Lemma~\ref{lem:property}, namely, we consider the function regularization $\Psi_t V$ of  $V$ and keep the notations as in the proof of that lemma. The equation \eqref{eq:bound1} now becomes, for $a\in K$,
$$\begin{aligned}
	(1+ \de^\mu) V_\de(a)
&\leq  \Psi_\de V(a) + c_1 \de^2 + c_1\de \\
&\leq \sup_{|z-a|\leq \de}	V(z) + 2c_1 \de \\
&\leq \phi(a) + C \de^\mu,
\end{aligned}$$
where in the last inequality  we  used the fact $V(z) \leq \phi(z) + C\de^\mu$ for ${\rm dist}(z,K) \leq \de$ and $\phi$ is $\mu$-H\"older continuous (we may decrease $\mu>0$ if necessary). Therefore, $V_\de(a) \leq \phi(a) + (\|V_\de\|_{L^\infty} + C) \de^\mu$. Now by the definition of $V$,
$$
	V_\de(z) \leq V (z) + c_2\de^\mu,
$$
where $c_2$ depends additionally on the sup-norm of $V$. This is precisely \eqref{eq:bound2}.
Therefore, $V$ is H\"older continuous on $X$ by the proof of the lemma.
\end{proof}

Let us prove the first characterization in Theorem~\ref{thm:characterization-hcp}.

\begin{proof}[Proof of Theorem~\ref{thm:characterization-hcp}-(a)]
Assume $V_K(z) \leq C \de^\mu$ for every $z \in X$ that ${\rm dist}(z,a) \leq \de \leq \de_0$, where $0<\de_0 \leq 1$ is fixed. This implies $V_{K}^*(z) \leq C \de^\mu$ for every $z$ such that ${\rm dist}(z,a) \leq \de/2$. In particular, $V_K$ is continuous. 

Next, as in the proof of Theorem~\ref{thm:characterization-c}-(a) we write $B = B(a,r)$ and 
let $0\leq \chi \leq 1$ be a cut-off function such that $\chi(a)=0$ and $\chi \equiv 1$ on $X\setminus B$. Notice its $C^1$ and $C^2$-norms are controlled by $c_1/r$ and $c_2/r^2$, respectively. Furthermore, for $\veps = c_0 r^2 \leq 1/2$ with a small $c_0>0$ depending only on $\om$ we have $-\veps\chi \in PSH(X,\om/2)$. 
Notice that  $\chi (z) \leq c_1 \de /r$ for ${\rm dist}(z,a) \leq \de$. Hence, using \eqref{eq:basic-ineq} we get
$$
	\veps  h_{K\cap  B}^* (z) \leq \veps \chi(z) + C \de^\mu \leq (\frac{c_1\veps}{r} + C) \de^\mu,
$$
where $C, c_1$ do not depend on $r$. Combining this with \eqref{eq:zero-one-function} we get 
  for ${\rm dist}(z, a) \leq \de/2$,
\[\label{eq:H-norm}
	V_{K\cap B}^* (z) \leq M \left(\frac{c_1}{ r} + \frac{C}{\veps} \right)  \de^\mu,
\]
where $M = \sup_XV_{K\cap B}^*$.
The proof is completed.
\end{proof}

The above proof indeed gives the following precise estimate 
\begin{cor} \label{cor:sharp-Holder-norm} Let $K, a, B(a,r)$ be as in Theorem~\ref{thm:characterization-hcp} and $V_K$ is continuous. 
\begin{itemize}
\item
[(a)] Assume $V_K$ is $\mu$-H\"older continuous at $a$. Then, for $0<\de \leq \de_0$,
\[\label{eq:norm-loc}
	V_{K \cap  B(a,r)} (z) \leq \frac{A}{cap(K \cap B(a,r))} \frac{ \de^\mu }{r^2}, \quad  {\rm dist}(z, a) \leq \de,
\]
where $A$ is a uniform constant that  is independent of $a$ and $r$. 
\item
[(b)] Conversely, if there exist uniform constants $0<\mu, r_0, \de_0 \leq 1$  and $A>0$ such that \eqref{eq:norm-loc} holds for every $a\in K$, $0<r \leq r_0$ and ${\rm dist}(z, a) \leq \de \leq \de_0$, then  $V_K$ is $\mu$-H\"older continuous.
\end{itemize}
\end{cor}

\begin{proof} The inequality in (a) is a direct consequence of \eqref{eq:H-norm} as $\veps = c_0r^2$ and the inequality $\sup_{X} V_{K\cap B}^* \leq A/cap(K\cap B)$ in \eqref{eq:cap-comparison}. 
The statement in (b) follows from a covering argument as follows. Let $z \in X$ be a point that ${\rm dist}(z, K) \leq \de$. Let $a\in K$ be such that ${\rm dist}(z, K) = {\rm dist}(z,a)$. Given $0<r\leq r_0$, we can cover $K$ by finitely many $B(a_i,r/2)$, $i\in I$. Then $a\in B(a_i,r/2)$ for some $a_i$. Hence, $B(a_i, r/2) \subset B(a,r)$. Put $c = \min_{i\in I} cap (K\cap B(a_i,r/2))>0$. We have 
$$
	cap(K \cap B(a,r)) \geq cap(K \cap B(a_i,r/2)) \geq c.
$$
It follows from monotonicity and the assumption \eqref{eq:norm-loc}  that 
$$
	V_K(z) \leq V_{K\cap B(a,r)} (z) \leq \frac{A}{c} \frac{\de^\mu}{r^2}.
$$
The proof is completed as $r$ is fixed.
\end{proof}

\begin{proof}[Proof of Theorem~\ref{thm:characterization-hcp}-(b)]
Assume that $V_K$ and $\phi$ are H\"older continuous. We wish to show that $V:= V_{K,\phi}$ is H\"older continuous. Without loss of generality, we may assume that $\phi$ is H\"older continuous with the same exponent $0<\mu \leq 1$. Otherwise, we just take the minimum of  two exponents. It follows from Theorem~\ref{thm:characterization-c}-(b) and Lemma~\ref{lem:weight-vs-unweight}-(c) that  $V\leq \phi$ on $K$ and it is continuous. To get the H\"older continuity of $V$, by Proposition~\ref{prop:W}, it is sufficient to prove 
$$
	V(z) \leq \phi(z) + C\de^\mu, \quad {\rm dist}(z, K) \leq \de \leq \de_0,
$$
for uniform constants $C$ and $0< \de_0\leq 1$. Indeed, 
fix such a point $w$ and let $a\in K$ be such that $0<{\rm dist}(w, a) = {\rm dist}(w, K) \leq \de$. Let $r>0$ be small and its value will be determined later. By H\"older continuity, $|\phi(z)- \phi(a)| \leq c_3 r^\mu$ on $B(a,r)$. Using this and the monotonicity \eqref{eq:monotonicity}, we obtain
\[\label{eq:basic-inequality-holder1}\begin{aligned}
	V  &\leq V_{K \cap  B(a,r), \phi(a) + c_3r^\mu} \\&= \phi(a) + c_3r^\mu + V_{K\cap B(a,r)} \\&\leq \phi(w) +  c_3 \de^\mu+ c_3r^\mu + V_{K\cap B(a,r)}.
\end{aligned}\]
By Corollary~\ref{cor:sharp-Holder-norm}-(a) and uniform density in capacity of $K$ we derive 
\[\label{eq:basic-inequality-holder2}
	V_{K\cap B(a,r)}(w) \leq \frac{A}{cap(K \cap B(a,r))} \frac{\de^\mu}{r^2} \leq \frac{A \, \de^\mu}{ \vka \,r^{q+2}}
\]
for uniform constants $A, \vka,q$ which are independent of  $r$ and $a$.
Now, we can choose $r = \de^\frac{\mu}{\mu+2+q}$ to conclude $V(w) \leq \phi(w) + C \de^{\mu'} $, where $\mu' = \frac{\mu^2}{\mu+2+q}$. Hence, $V$ is H\"older continuous on $X$.
\end{proof}

\begin{remark} \label{rmk:exponent-W}The above proof showed that for a compact $K$ satisfying the uniform density in capacity in \eqref{eq:cap-density} if $V_K$ is $\mu$-H\"older continuous, then $V_{K,\phi}$ is $\mu'$-H\"older continuous for $\mu' = \mu^2/(\mu+2+q)$.
\end{remark}

\begin{remark} Similar to Corollary~\ref{cor:loc-w-c} the converse direction of Theorem~\ref{thm:characterization-hcp}-(b) holds for the H\"older continuous weight $\phi$ which is a quasi-plurisubharmonic function. In particular, it holds if  $\phi$ is  $C^2$-smooth. Since the proof is very similar to the one of that corollary, we omit it. 
\end{remark}

\begin{proof}[Proof of Theorem~\ref{thm:characterization-hcp}-(c)]  
 For the sufficient condition, let us assume $K$ has local $\mu$-HCP of order $q$. This means that for every $a\in K$ and $0<\de \leq 1$,
\[\label{eq:basic-ineq-holder-loc1}
	L_{K\cap B(a,r)} (z) \leq \frac{C\de^\mu}{r^q}, \quad {\rm dist}(z, a) \leq \de.
\]
Then, Lemma~\ref{lem:cap-density} implies that  $K$ has a uniform density in capacity. Furthermore, by the monotonicity, H\"older continuity of $\rho$ and \eqref{eq:basic-ineq-holder-loc1} we infer
$$
	L_{K,\rho} (z) \leq L_{K\cap B(a,r), \rho(a) + c_3 r} = \rho(a) + c_3 r + L_{K\cap B(a,r)} \leq \rho(a) + c_3 r + \frac{C\de^\mu}{r^q},
$$
where $c_3$ is the Lipschitz norm of $\rho$ on the ball $B(0,R)$ containing $K$.
Hence, for ${\rm dist}(z, K) \leq \de$ and $0< r <r_0$,
$$
	L_{K,\rho}(z) \leq \rho(z) + 2 c_3 r + \frac{C\de^\mu}{r^q}.
$$
Now,  we can choose $r = \de^{\eps}$ with $\eps = \mu/(q+1)$ to conclude that 
$$L_{K,\rho}(z) - \rho(z) \leq C \de^{\mu'}, \quad \mu' = \frac{\mu^2}{1+q}.$$
Hence, the H\"older continuity of $V_K$ follows from the identity \eqref{eq:identity-LV}:
\[\label{eq:identity}
	L_{K,\rho} (z) =V_{K} (z) + \rho (z), \quad z\in \bC^n.
\]

Conversely,  assume $V_K$ is $\mu$-H\"older continuous on $\bP^n$ and $K$ has a uniform density in capacity. Let $a\in K$ and fix a ball $B:= B(a,r_0)$ as in the proof of Theorem~\ref{thm:characterization-c}-(c). By the comparisons   between the extremal functions \eqref{eq:weight-vs-unweight} and \eqref{eq:identity}, we have for $0<r<r_0$,
$$
	L_{K\cap  B}(z) \leq L_{K\cap  B_r,\rho}(z) - \inf_{K\cap  B_r} \rho  =  V_{K\cap B_r} (z) + \rho(z) - \inf_{K\cap  B_r} \rho.
$$
The right hand side can be estimated by Corollary~\ref{cor:sharp-Holder-norm}-(a) and the uniform density in capacity as follows. We have for ${\rm dist}(z, a) \leq \de \leq \de_0$, 
$$V_{K\cap B_r}(z) \leq \frac{A}{cap(K \cap B_r)}\frac{\de^\mu}{ r^2} \leq \frac{A}{\vka} \frac{\de^\mu}{r^{2+q}}.$$ 
Observe also that $$\begin{aligned} \rho (z) - \inf_{K\cap B_r}\rho &= \rho(z) -\rho(x) \\& \leq c_3 (|x-a| + |z-a|) \\ 
& \leq c_3 (r + \de),
 \end{aligned}
 $$
 where $\rho(x) = \min_{B(a,r)} \rho$. Altogether we obtain for every $0<r\leq r_0$,
 $$
 	L_{K\cap B} \leq \frac{A}{\vka} \frac{\de^\mu}{r^{2+q}} + c_3(r+\de).
 $$
Choosing $r = r_0\de^\frac{\mu}{3+q}$ we conclude that $L_{K\cap B}(z) \leq C \de^\frac{\mu}{3+q}/r_0^2$. Notice that the constant $C= C(A, c_3)$ is independent of the point $a$ and $r$. The proof of the necessary condition in (b) follows.
\end{proof}

\section{Regularity of compact sets}
\label{sec:regularity}

Thanks to the local characterizations in Theorem~\ref{thm:characterization-c} and Theorem~\ref{thm:characterization-hcp} for studying the regularity of the extremal functions we may assume that the compact set is contained in a holomorphic coordinate unit ball. Without loss of generality, we may restrict ourself to compact subsets in $\bC^n \subset \bP^n =:X$. We provide a large number of examples of compact sets possessing these properties.

\subsection{Locally $L$-regular sets}
\label{sec:local-regularity}
In view of Theorem~\ref{thm:characterization-c}-(c) and the applications in \cite{BBW11} and \cite{MV22} we make an effort to collect in this section many well-known examples of locally $L$-regular sets. This is a classical topic but the results are scattered in many different places.

\begin{expl}[accessibility criterion]\label{expl:acc} This criterion is due to  Ple\'sinak \cite{Pl79}. It provides the following typical example.
Let  $\Om \subset \bC^n$ be an open bounded subset with $C^1$-boundary. Then, $\bar\Om$ is locally $L$-regular (see also \cite[Corollary~5.3.13]{Kl91} and \cite{Dieu}).  This criterion has been generalized in \cite{Pl84} for subanalytic subset in $\bC^n$ and later in \cite{Pl01} for more general setting. 
\end{expl}

\begin{expl}[Siciak] 
\label{expl:siciak97a}
Let us denote $\bK$ for either $\bK=\bR$ or $\bK=\bC$. Let $B(x,r)$ be a closed ball in $\bK^n$ with center at $x$ and radius $r>0$. Let $E\subset \bK^n$ be a compact subset. 
\begin{itemize}
\item
[(a)] {\em Cusps:} let $h: [0,1]\to \bK^n$ and $r:[0,1] \to \bR^+$ be a continuous function such that $r(0)=0$ and $r(t)>0$, $0<t\leq 1$. Let $a = h(0)$.
A  cusp with vertex $a$ is a compact subset of $\bK^n$ given by 
$$
	C(h, r):= \bigcup_{0\leq t\leq 1} B (h(t), r(t)).
$$
We say that  $E$ has a cusp $C(h, r)$ at $a=h(0) \in E$ if $C(h,r) \subset E$. 
It is proved \cite[Proposition~7.6]{Siciak97} that if $E$ has a cusp $C(h,r)$ at $a\in E$  such that $r(t) = Mt^m$ and $|h(t)-h(0)|\leq At^q$, $0\leq t\leq 1$, where $M,m, A$ and $q$ are positive uniform constants, then $E$ is locally $L$-regular at $a$.

\item[(b)] {\em Corkscrew:}
 $E$ is said to have a corkscrew of order $s>0$ at $a\in E$ if there exists $r_0 \in (0,1)$ such that for every $0<r<r_0$ we can find $a'\in \bK$ for which $B (a', r^s) \subset B(a,r) \cap E$. Let $C(h,r)$ be a  cusp at $a=h(0)$. By the triangle inequality
$$
	|x - h(0)| \leq |x-h(t)| + |h(t) - h(0)|,
$$
it follows that if $r(t) = Mt^m$ and $|h(t)-h(0)| \leq At^q$, then this cusp has a corkscrew of order $s = m/\min\{m, q\} \geq 1$. 
The conclusion is that if $E$ has a corkscrew of order $s>0$ at $a\in E$, then it is locally $L$-regular at $a$ \cite[Proposition~7.10]{Siciak97}.
\end{itemize}
\end{expl}

\begin{remark} There is still missing a good characterization of locally $L$-regular compact sets in terms of capacity. The criterion in Theorem~\ref{thm:characterization-c}-(c) may provide an approach from global pluripotential theory for this problem.
It is worth to recall from Cegrell \cite{Ce82} that 
if $E \subset \bR^n \equiv \bR^n + i \cdot 0 \subset \bC^n$ is a compact subset, then the $L$-regularity and the local $L$-regularity coincide.
\end{remark}

\subsection{Local {\rm $\mu$-HCP} sets}
\label{sec:HCP}

Here we require on the local HCP with a precise estimate of the H\"older coefficient. 
Let $E\subset \bK^n$ be a compact subset ($\bK = \bR$ or $\bK = \bC$) and $a\in E$. Following Siciak \cite{Siciak85} we consider the following 

\begin{defn}[Condition {\bf (P)}] For each point $a = (a_1,...,a_n)\in E$ there exist compact connected  subsets $\ell_1,...,\ell_n \subset \bC$ and an affine non-singular mapping $h: \bC^n \to \bC^n$ satisfying
\begin{itemize}
\item
[(i)] $a \in h(\ell_1 \times \cdots \times \ell_n) \subset E$;
\item
[(ii)] $\| \ell_j \| \geq d>0$, $j =1,...,n$ ;
\item
[(iii)] $\|Dh\| \geq m >0$,
\end{itemize}
where $\| \ell_j \|$ is the diameter of $\ell_j$, the constants $d$ and $m$ do not depend on $a$. 
\end{defn}
Note that  $Dh$ denotes the Fr\'echet derivative of $h$ which is a linear map of $\bC^n$ to $\bC^n$ and does not depend on the point $a$. Hence, its norm $\|Dh\|$ does not depend on the point.  
Roughly speaking  the condition ({\bf P}) for $E$ means at each point $a\in E$ there is an {\em affine cube} of uniform size with vertex at $a$ and it is contained in $E$.

A basic result \cite[Proposition~5.1]{Siciak85} says that for compact subset $E\subset \bC^n$ satisfying the condition (P),
\[\label{eq:siciak-HCP}
	L_E(z) \leq \frac{4 \sqrt{1+\|E\|}}{md} \de^\frac{1}{2}
\] 
holds for every  ${\rm dist}(z, E) \leq \de$ and  $0<\de\leq 1$, where $\|E\|$ is the diameter of $E$.

This precise estimate of both the H\"older coefficient and the  H\"older exponent  allows us to study the local $\mu$-HCP of many classes of compact sets in $\bK^n$. Let us describe them again emphasizing on the locality.

\begin{expl} \label{expl:local-HCP} Let $\Om \subset \bK^n$ be a bounded domain. Let $E\subset \bK^n$ be a compact set.
\begin{itemize}
\item[(a)] {\em Lipschitz domain}:  Assume $\Om$ has  Lipschitz boundary and $a'\in \ov\Om$. Then for every $r>0$, the compact set $\ov \Om \cap B(a',r)$ satisfies the condition ({\bf P}) at each point.

\item[(b)] {\em Geometrical condition}: Assume there exists $r>0$ such that for every $a\in E$, there is a point $a'\in E$ for which the convex hull of the set $\{a\} \cup B(a', r)$ is contained in $E$. Then, $E$ has local $\frac{1}{2}$-HCP of order $q=n$.

\item[(c)] {\em Uniform interior sphere condition}:  there exists $r>0$ such that for each $a\in \d\Om$ there is $B(a',r)\subset \Om$ and $B(a',r) \cap (\bK^n \setminus \Om) = \{a\}$. Then, $\ov\Om$ has local  $\mu$-HCP with the exponent $\mu=1$ and of order $q=1$ if $\bK = \bC$; or with exponent $\mu=1/2$ and of order $q=2$ if $\bK=\bR$.
\end{itemize}
\end{expl}

Let us give the explanation of the above examples.

(a) For the bounded domains with Lipschitz boundary it satisfies the so called Property (${\bf H_2}$) (see \cite[page 166-167]{BG71}) which is a local property at a point. Namely there exists a non-empty parallelepiped $\pi_0$ such that each point $a\in \d\Om$ is the vertex of a parallelepiped $\pi_a$ congruent with $\pi_0$ (with respect to orthogonal transformation and translation) satisfying $\pi_a\subset \ov\Om$. This property clearly implies the condition (P) at $a\in \d\Om$ for $\ov\Om\cap B(a',r)$. Together with the estimate \eqref{eq:siciak-HCP} we conclude the local $\frac{1}{2}$-HCP of order $q=1$ of such subsets. 

(b) Next, by \cite[Propostion~5.3]{Siciak85} the geometric condition implies the condition ({\bf P}) with $\ell_i =[0,1] \subset \bC$, $i=1,...,n$, and
\[\label{eq:GC}
	m = \left(\frac{r}{n^{3}}\right)^n \frac{1}{\|E\|^{n-1}}.
\]

(c) Finally, the uniform interior sphere condition implies the geometric condition. However, we will give a proof of the improvement of the exponent and order in Lemma~\ref{lem:uniform-sphere} below.

One important case of the geometrical condition is that 
it is satisfied for any convex compact set with non-void interior in $\bK^n$, where $\bK = \bR$ or $\bK =\bC$. Hence, by
 \eqref{eq:siciak-HCP} and \eqref{eq:GC} (see also \cite[Remark~5.4]{Siciak85}) we have 
\begin{cor}[Siciak]
\label{cor:convex}
A convex compact subset in $\bK^n$ satisfying the geometrical condition has  local {\rm $\mu$-HCP} with the (optimal) exponent $\mu=1/2$ and of order $q= n$. 
\end{cor}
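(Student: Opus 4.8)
The plan is to reduce the statement to the geometrical condition of Example~\ref{expl:local-HCP}(b) and then read off the exponent and the order directly from the quantitative inputs \eqref{eq:GC} and \eqref{eq:siciak-HCP}; the only point that needs a separate argument is the optimality of the exponent $\mu=1/2$.

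First I would verify the geometrical condition at every scale, uniformly in the base point. Since $E$ is convex with non-void interior, it contains a closed ball $B(x_0,\rho_0)$ with $\rho_0>0$. For any $a\in E$, convexity forces the convex hull of $\{a\}\cup B(x_0,\rho_0)$ to lie in $E$, which is exactly the geometrical condition with the uniform radius $\rho_0$ and center $x_0$. To make this local at the scale $s$, I would pass to $E_s:=E\cap B(a,s)$ and shrink the cone towards $x_0$ by the factor $\lambda_s=s/(\|E\|+\rho_0)$: writing $a_s:=a+\lambda_s(x_0-a)$ and $c:=\rho_0/(\|E\|+\rho_0)$, one checks that $B(a_s,cs)\subset E_s$ and that the convex hull of $\{a\}\cup B(a_s,cs)$ is still contained in $E_s$. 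Hence $E_s$ satisfies the geometrical condition with inscribed radius $r_s=cs$, with $c$ independent of $a$.

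Next I would feed this into the two cited estimates. By \eqref{eq:GC} applied to $E_s$, with $\ell_i=[0,1]$ (so $d=1$) and bounding the diameter of $E_s$ by the fixed global diameter $\|E\|$, the set $E_s$ satisfies condition~(P) with
$$
	m_s=\frac{(cs/n^3)^n}{\|E\|^{n-1}}\geq C_1\, s^n,
$$
and it is precisely the exponent $n$ in $s^n$, coming from the factor $\|E\|^{n-1}$, that produces the order $q=n$. Substituting $m_s$ together with $\|E_s\|\leq\|E\|$ into \eqref{eq:siciak-HCP} gives, for every $z$ with ${\rm dist}(z,a)\leq\de\leq 1$,
$$
	L_{E_s}(z)\leq\frac{4\sqrt{1+\|E\|}}{m_s}\,\de^{1/2}\leq\frac{C}{s^n}\,\de^{1/2},
$$
where $C$ depends only on $n,\rho_0,\|E\|$; taking the supremum over such $z$ yields $\vpi_{E\cap B(a,s)}'(a,\de)\leq C\de^{1/2}/s^n$, i.e. the local ${\rm HCP}$ of order $q=n$ and exponent $\mu=1/2$, uniformly in $a$. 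The square root in \eqref{eq:siciak-HCP} is what fixes the exponent at $1/2$.

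Finally, for the optimality of $\mu=1/2$ I would exhibit one convex body saturating it, taking the real case $\bK=\bR$. For the segment $E=[-1,1]\subset\bR\subset\bC$ one has $L_E(z)=\log|z+\sqrt{z^2-1}|$, and with $z=1+\de$ a direct expansion gives $L_E(z)\sim\sqrt{2}\,\de^{1/2}$ as $\de\to0$; thus $L_E$ is not $\mu$-H\"older for any $\mu>1/2$, and by \eqref{eq:local-modulus-of-continuity} neither is its local modulus of continuity, so the exponent cannot be improved. I expect the only real obstacle to be the bookkeeping in the middle step: one must track that the inscribed radius scales like $s$ while the diameter is controlled by the fixed $\|E\|$, so that the resulting constant $C$, and hence the order $q=n$, is genuinely uniform in the base point $a$; everything else is a substitution into the estimates of Siciak already recorded above.
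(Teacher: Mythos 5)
Your proposal is correct and follows essentially the same route as the paper, which obtains the corollary by combining the geometrical condition with Siciak's estimates \eqref{eq:GC} and \eqref{eq:siciak-HCP}. The only difference is that you spell out two points the paper leaves implicit (citing Siciak's Remark~5.4 instead): the scaling argument showing $E\cap B(a,s)$ satisfies the geometrical condition with inscribed radius comparable to $s$ uniformly in $a$ — which is exactly where the order $q=n$ comes from — and the optimality of $\mu=1/2$ via the segment $[-1,1]$; both details are verified correctly.
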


Let us give the proof of the statement in Example~\ref{expl:local-HCP}-(c).
Notice that the uniform interior sphere condition is satisfied for all smooth bounded domains with $C^{1,1}$-boundary. 

\begin{lem}\label{lem:uniform-sphere} Let $\Om\subset \bK^n$ be a bounded domain satisfying the uniform interior sphere condition. 
\begin{itemize}
\item
[(a)] If $\bK = \bC$, then $\ov\Om$ has local {\rm $\mu$-HCP} with the optimal H\"older exponent $\mu=1$ and of order 1.
\item
[(b)] If $\bK= \bR$ and $\bR^n \equiv \bR^n + i \cdot 0 \subset \bC^n$, then $\ov\Om$ has local {\rm $\mu$-HCP}  with the optimal exponent $\mu =1/2$ and of order $q=2$.
\end{itemize}
\end{lem}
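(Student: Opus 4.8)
The plan is to reduce both cases, via monotonicity, to a single $\bK$-ball passing through $a$, and then to compute on that ball. First I would record the elementary geometric fact that, writing $R$ for the radius in the uniform interior sphere condition, there is $0<r_0<R$ so that for every $a\in\ov\Om$ and every $0<r<r_0$ the set $\ov\Om\cap B(a,r)$ contains the $\bK$-ball
$$ B_r:=B\!\left(c_r,\tfrac r2\right)\cap\bK^n,\qquad c_r:=a+\tfrac r2 u, $$
of radius $r/2$ passing through $a$, where $u$ is the inner unit normal at a nearest boundary point of $a$ (any unit vector if $a$ is interior). When $a\in\d\Om$ this is just the observation that $B_r\subset B(a',R)\cap B(a,r)$ for the interior sphere $B(a',R)$, and the interior case is handled the same way using the nearest boundary point. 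Since $B_r\subset\ov\Om\cap B(a,r)$, monotonicity gives $L_{\ov\Om\cap B(a,r)}\le L_{B_r}$, so it remains to estimate $\vpi'_{B_r}(a,\de)=\sup_{|z-a|\le\de}L_{B_r}(z)$.

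For part (a), $\bK=\bC$, I would use the classical formula $L_{B(c_r,r/2)}(z)=\log^+\bigl(2|z-c_r|/r\bigr)$ for the extremal function of a Euclidean ball, a member of $\cL$ vanishing exactly on $B_r$. For ${\rm dist}(z,a)\le\de$ one has $|z-c_r|\le\de+\tfrac r2$, so, using $\log(1+x)\le x$,
$$ L_{B_r}(z)\le\log\!\Bigl(1+\tfrac{2\de}{r}\Bigr)\le\frac{2\de}{r}. $$
This holds for all $0<\de\le1$ and gives local HCP with exponent $\mu=1$ and order $q=1$.

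For part (b), $\bK=\bR$, the explicit formula is no longer available and I would instead invoke Siciak's estimate \eqref{eq:siciak-HCP}, which automatically produces the exponent $\tfrac12$. The point is that the real ball $B_r$ of radius $r/2$ satisfies the condition (P) uniformly with non-degenerate constants: inscribing in $B_r$ the cube whose $2^n$ vertices lie on the bounding sphere, and rotating it so that one vertex sits at $a$, I obtain a real cube contained in $B_r$ with a vertex at $a$ and edge length $\sim r/\sqrt n$; taking $h$ to be this rotation followed by a translation gives $\|Dh\|=1$ and $d\sim r$, while $\|B_r\|=r\le1$. Hence \eqref{eq:siciak-HCP} applied to $E=B_r$ yields, for ${\rm dist}(z,a)\le\de\le1$,
$$ L_{B_r}(z)\le\frac{4\sqrt{1+r}}{md}\,\de^{1/2}\le\frac{C\sqrt n}{r}\,\de^{1/2}, $$
that is, exponent $\mu=\tfrac12$ and order $q=1$; since $r<1$ this a fortiori gives the order $q=2$ asserted in (b). The dichotomy of exponents is transparent from this reduction: the complex ball has a Lipschitz extremal function up to the boundary, whereas already the one-dimensional model $L_{[-1,1]}$, which behaves like $|z-1|^{1/2}$ near the endpoint $z=1$, shows the totally real ball cannot do better than exponent $\tfrac12$.

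The only genuine obstacle is the uniformity in the condition (P): one must check that at an \emph{arbitrary} boundary point $a$ of the sphere a cube of edge length comparable to $r$ can be anchored, with $\|Dh\|$ bounded below independently of $a$ and $r$. This is precisely where the sphere, as opposed to a general convex body, is used, and it is what removes the dimensional dependence of the convex case: Corollary~\ref{cor:convex} only gives order $q=n$ because for a general convex body the affine cube at a vertex must be taken with a Jacobian degenerating like $r^{n}$, whereas for the sphere the inscribed cube keeps $\|Dh\|$ of order one. Everything else is the cited estimate \eqref{eq:siciak-HCP} together with the rescaling above.
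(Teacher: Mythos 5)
Your part (a) is exactly the paper's own argument: inscribe a complex ball of radius $r/2$ through $a$ inside $\ov\Om\cap B(a,r)$, apply the explicit formula $L_{B(c,\rho)}(z)=\max\{\log(|z-c|/\rho),0\}$, and use $\log(1+x)\le x$. Part (b), however, takes a genuinely different route. The paper quotes Klimek's explicit formula for the extremal function of a real ball and rescales by $z\mapsto(z-a)/r$; you instead feed the inscribed real ball into Siciak's condition-(P) estimate \eqref{eq:siciak-HCP}, observing that a ball satisfies (P) with isometric affine maps ($m=1$) and cubes of edge $\sim r/\sqrt n$. This is valid and buys something: the coefficient comes out as $C\sqrt n/r$, i.e.\ order $q=1$ rather than $q=2$, with the uniformity in $a$ and $r$ completely transparent; your diagnosis of why the ball beats Corollary~\ref{cor:convex} (isometries versus affine maps whose nondegeneracy constant \eqref{eq:GC} collapses like $r^n$) is also correct. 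One small caveat: condition (P) must be verified at \emph{every} point of $B_r$, not only at the vertex $a$; the same rotated-cube construction does work at all points of a real ball, but this should be said.

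The genuine soft spot is the reduction at interior points $a$ with $0<t:={\rm dist}(a,\d\Om)<r/2$ (for $t\ge r/2$ the centered ball $B(a,r/2)$ suffices, and for $a\in\d\Om$ your construction is fine). The uniform interior sphere condition provides a tangent ball at the nearest boundary point $b$ with \emph{some} inner normal $u$; nothing guarantees that $u$ points toward $a$. Boundary points of such domains may carry several inner normals (slits and re-entrant corners are allowed by the hypothesis), and for a wrong choice the construction fails: for a slit disk with $a$ just above the slit, $b$ beneath it on the slit, and $u$ the downward normal at $b$, the ball $B(a+(r/2)u,r/2)$ crosses the slit and is not contained in $\ov\Om$. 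So ``handled the same way using the nearest boundary point'' needs an argument: either show that some radius-$R$ tangent ball at $b$ actually contains $a$ (plausible, but not a one-line consequence of the definition), or treat these points separately. For $\bK=\bC$ this is easy: if $\de\le t$ then $B(a,\de)\subset\ov\Om\cap B(a,r)$ and the extremal function vanishes there, while if $\de>t$ one passes to the boundary point $b$ using $B(b,r/2)\subset B(a,r)$ and ${\rm dist}(z,b)\le2\de$. For $\bK=\bR$ the regime $\de\le t$ is not free (the real ball $\wt B(a,t)$ alone only gives $C\sqrt{\de/t}$, whose constant blows up as $t\to0$); instead bound $L_{\ov\Om\cap B(a,r)}$ by $C\sqrt{t}/r^{q}$ on the complex ball $B(a,t)$ via the estimate at $b$, and multiply by the relative extremal function of $\wt B(a,t)$ in $B(a,t)$, which is $\le C\sqrt{\de/t}$; the factors of $t$ cancel, giving $C\sqrt{\de}/r^{q}$. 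In fairness, the paper's proof silently treats only boundary points, so this gap is shared; but since your write-up makes an explicit claim about interior points, it requires one of these fixes.
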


\begin{proof} (a) Let $r>0$ be fixed.  The uniform interior sphere condition means that there exists a closed ball $B(a', r_0)$ such that 
$$
	 B(a',r_0) \cap (\bC^{n} \setminus \Om) = \{a\},
$$
where $r_0>0$ is a uniform constant. In particular, $|a' - a|  =  r_0$. By decreasing $r_0$ we may assume $r/2 \leq r_0 \leq r$ and by dilating this ball we may assume that $B(a',r_0) \subset \Om \cap  B(a,r)$. Hence,
$$
	L_{\ov\Om \cap  B(a,r)} (z) \leq L_{B(a',r_0)}(z) = \max\{\log (|z-a'|/r_0),0\},
$$
where the second identity used the explicit formula of the extremal function (see, e.g. \cite[Example~5.1.1]{Kl91}). 
For $w \in \bC^n$ with  ${\rm dist} (w,a) \leq  \de$ with small $\de$,  we have $$|w-a'| \leq |w-a| + |a-a'| \leq \de + r_0.$$ It follows that 
$$
	L_{\ov\Om \cap B(a,r)}(w) \leq \log (1 + \de/r_0) \leq \de/r_0 \leq 2\de/r.
$$
This means that $L_{\ov\Om\cap B(a,r)}$ is  Lipschitz continuous at $a$. Furthermore, the Lipschitz norm at that point  is independent of the point. Thus, $\ov\Om$ has local $\mu$-HCP with the exponent $\mu=1$ and of order $q=1$.

(b) Assume now $\ov\Om \subset \bR^n + i \cdot 0 \subset \bC^n$.  The proof goes  along the same lines as above.  Noticing that if $a\in \ov\Om$, then $B(a,r) \cap (\bR^n + i \cdot 0)$ is  the real ball $\wt B(a,r) \subset \bR^n$ and we  have an explicit  formula (see e.g. \cite[Theorem~5.4.6]{Kl91}) for such a ball via $L_{\wt B(a,r)} (z) = L_{\wt B(0,1)} (f(z))$, where
$$
	L_{\wt B(0,1)} (z)= \frac{1}{2} \log \left({\bf h} \left( |z|^2 +|\lc z, \bar z\rc  -1| \right) \right), \quad f(z) = (z-a)/r,
$$ 
and ${\bf h}(x) = x + (x^2 -1)^\frac{1}{2}$ for $x\geq 1$.
\end{proof}

Notice that there are other kind of examples arising from Example~\ref{expl:siciak97a} for cusp and corksrcew sets, because it is possible to derive precise estimate on the H\"older coefficient there. We refer the reader to \cite[Proposition~6.5]{Siciak97} for more detail.

\subsection{Uniformly polynomial cuspidal sets} 
\label{sec:UPC}
In this section we follow the method of Paw\l ucki and Ple\'sniak \cite[Theorem~4.1]{PP86} to study the uniformly polynomial cuspidal sets. The improvement is  a precise estimate on the H\"older coefficient. These sets contain, for example,  all bounded convex sets with non-void interior in $\bK^n$ (here $\bK = \bR$ or $\bK = \bC$)  and all bounded domains in $\bK^n$ with Lipschitz boundary (see Example~\ref{expl:local-HCP} and also \cite[page 469]{PP86}). 

Now we are focusing on the compact sets with cusps in $\bR^n$ considered as a natural subset in  $\bC^n$.
A compact subset $E \subset \bR^n$  is called {\em uniformly polynomial cuspidal} (UPC for short) if there exist positive constants $M$, $m$ and a positive integer $d$ such that for each $x\in E$, one may choose a polynomial map 
$$
	h_x: \bR \to \bR^n, \quad \deg h_x \leq d
$$
satisfying
$$\begin{aligned}
&	h_x(0) = x \quad \text{and}\quad h_x([0,1]) \subset E; \\
&	{\rm dist } (h_x(t), \bR^n \setminus E) \geq M t^m  \quad \text{for all } x\in E,\text{ and } t\in [0,1],
\end{aligned}$$

An important property of the UPC sets is that if $a \in E$, then 
\[ \label{eq:cusp}
	 E_a := \bigcup_{0\leq t \leq 1}  D(h_a(t), Mt^m) \subset E,
\]
where 
$D(p, r) = \{ x\in \bR^n: |x_1-p_1| \leq r, ..., |x_n-p_n| \leq r\}$ denotes the closed cube.
Notice that $E_a$ is also a closed subset.
\begin{remark} \mbox{}
\begin{itemize}
\item
[(a)] Without loss of generality we may assume that the exponent $m$ is a positive integer. Otherwise we will choose the smallest positive integer larger than $m$ instead.
\item
[(b)] It is not clear from the definition that the norm of coefficients  
$\sum_{\ell=0}^d ||h_x^{(\ell)} (0) \|$ of $h_x$ is uniformly bounded on $E$. Fortunately, in many interesting examples of UPC sets this assumption is satisfied.
\end{itemize}
\end{remark} 

\begin{thm}\label{thm:UPC} Let $E \subset \bR^n$ be a compact {\rm UPC} subset such that $\sum_{\ell=0}^{d} \|h_x^{(\ell)}(0)\|$ is uniformly bounded on $E$. Then, $E$  has local {\rm $\mu$-HCP} of some order $q$.
\end{thm}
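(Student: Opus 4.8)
The plan is to reduce the statement to an estimate for the extremal function of the cusp $E_a$ attached to each point $a\in E$, and then to follow the polynomial-parametrization method of Paw\l ucki--Ple\'sniak, keeping careful track of every constant so that it stays uniform in $a$ and exhibits the correct power of the truncation radius.

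First I would use monotonicity. Since $E_a\subset E$ by \eqref{eq:cusp}, for every $r>0$ one has $L_{E\cap B(a,r)}\le L_{E_a\cap B(a,r)}$, so it suffices to bound $\vpi'_{E_a\cap B(a,r)}(a,\de)$ by $C\de^\mu/r^q$. Because $h_a$ has degree at most $d$ and, by hypothesis, uniformly bounded coefficients $\sum_{\ell=0}^d\|h_a^{(\ell)}(0)\|$, the map $h_a$ is Lipschitz on $[0,1]$ with a constant $C_0$ independent of $a$; hence a truncated cusp $\bigcup_{0\le t\le\tau}D(h_a(t),Mt^m)$ with $\tau\approx r/C_0$ lies inside $E_a\cap B(a,r)$. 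This uniformity is exactly what will force the final exponent $\mu$ and order $q$ to be independent of the base point.

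The heart of the matter is to combine control \emph{along} the curve with control \emph{transverse} to it. Along the curve, if $u$ lies in the Lelong class $\cL$ with $u\le 0$ on $E_a$, then $u\circ h_a$ is plurisubharmonic, vanishes on $[0,1]$, and has logarithmic growth of order at most $d$; by the very definition of the one-variable extremal function this gives the clean bound $u(h_a(\zeta))\le d\,L_{[0,1]}(\zeta)$, an explicit $1/2$-H\"older estimate. Transverse to the curve I would use that each cube $D_t:=D(h_a(t),Mt^m)\subset E$ satisfies Condition (P) after rescaling by $Mt^m$, so Siciak's estimate \eqref{eq:siciak-HCP} yields $L_{D_t}(z)\le C\,({\rm dist}(z,D_t)/Mt^m)^{1/2}$ on the relevant range. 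Transferring both pieces into the Bernstein--Walsh/Siciak description $L_{E_a\cap B(a,r)}(z)=\sup\{\tfrac1k\log|P(z)|: k\ge1,\ \deg P\le k,\ \|P\|_{E_a\cap B(a,r)}\le1\}$, I would, for a point $z$ with $|z-a|\le\de$, select the cube scale $t=t(\de)$ that balances the longitudinal exponent against the transverse one, truncate at scale $r$, and read off the resulting power $\de^\mu$ together with a coefficient growing like $r^{-q}$.

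The main obstacle is precisely the vertex: as $t\to0$ the transverse thickness $Mt^m$ of the tube collapses, so neither the longitudinal nor the transverse estimate alone controls a point $z$ lying off the curve near $a$, and a naive Taylor expansion combined with the cube Markov inequality reintroduces a factor growing with $\deg P$. The way around this is to perform the whole transfer at a single common degree $k$ and to carry out the balancing of the scales $t$, $\de$ and $r$ through a weighted one-variable extremal estimate, so that the $\tfrac1k\log$ normalization removes the degree dependence. The bookkeeping of this balancing is what produces the explicit exponent $\mu$ and order $q$ as functions of $m$ and $d$, and checking that every constant entering it depends only on $M,m,d$ and the uniform coefficient bound --- never on $a$ --- is the delicate point.
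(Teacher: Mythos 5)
Your reduction (monotonicity to the cusp $E_a$, the uniform Lipschitz bound on $h_a$ coming from the coefficient hypothesis, and the one-variable functoriality $u\circ h_a\le d\,L_{[0,1]}$) is sound, and you correctly identify the vertex as the obstruction. But the combination step — the heart of your argument — has a genuine gap, and for $n\ge 2$ it cannot work in the form you describe. The problem is that all of your tools are one-complex-variable slices through the cusp. The longitudinal estimate controls $u$ only on the complexified curve $h_a(\bC)$; more generally, the curves $\zeta\mapsto h_a(\zeta)+M\zeta^m w$ are admissible only for \emph{real} $w\in[-1,1]^n$ (otherwise their restriction to $[0,1]$ leaves $E_a$, and $u\circ f$ no longer vanishes there). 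The union of such curves over $|\zeta|\le\de$ covers only points of the form $a+\eta w+O(\de)$ with $\eta\in\bC$, $w$ real, i.e.\ displacements that are a \emph{complex scalar times a real vector}. For $n\ge2$ this is a proper, measure-zero-codimension subset of a complex neighborhood of $a$ (e.g.\ points with $(z_1-a_1)/(z_2-a_2)\notin\bR$ are missed), yet local HCP requires bounding $L_{E_a\cap B(a,r)}(z)$ at \emph{all} complex $z$ near $a$. The transverse cube estimate cannot fill this hole: for $z$ near the vertex, ${\rm dist}(z,D_t)\gtrsim t$ while the cube has size $Mt^m$, so the bound $C({\rm dist}(z,D_t)/Mt^m)^{1/2}\gtrsim t^{(1-m)/2}$ never decays in $\de$, no matter how you balance $t=t(\de)$ — the minimization over $t$ yields a constant, not a power of $\de$. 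Finally, the claim that working "at a single common degree $k$" with the $\tfrac1k\log$ normalization removes the degree dependence is a hope, not an argument: Taylor-plus-Markov on cubes produces factors like $\log(1+Ck^2\de)$ whose $k$-dependence survives the normalization, and it is precisely because Markov-type inequalities do not imply H\"older continuity of the extremal function that this route is known to be lossy.

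The missing idea — the one the paper uses, following Paw\l ucki--Ple\'sniak — is to complexify in \emph{one extra dimension} rather than slice by one-variable discs. One writes $E_a$ as the image of the convex pyramid $S=\{(t,tx_1,\dots,tx_n): t\in[0,1],\ |x_i|\le1\}\subset\bR^{n+1}$ under the single polynomial map $p(t,z)=h_a(t)+M(z_1^m,\dots,z_n^m)$, applies the several-variable functoriality Lemma~\ref{lem:L-polynomial} (you only invoke its one-variable case), and uses the HCP of convex sets (Corollary~\ref{cor:convex}) for the shrunken pyramid $S(r')$. The vertex is then handled by the inclusion \eqref{eq:inclusion-chain}: since each coordinate map $z_i\mapsto z_i^m$ is \emph{surjective} from the complex disc of radius $\de$ onto the disc of radius $\de^m$, and the $n$ transverse complex variables act \emph{independently}, the image $p(\{0\}\times D(0,\de))$ is a full complex polydisc of radius $M\de^m$ around $a$ — exactly the covering your one-parameter families fail to provide. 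The coefficient hypothesis enters only once, in \eqref{eq:order-UPC}, to make $r'$ comparable to $r$ uniformly in $a$; the outcome is exponent $\mu=1/2m$ and order $q=n+1$.
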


It is worth to emphasize that by  \cite[Corollary~6.6, Remark~6.5]{PP86}, we have
all compact fat subanalytic subsets in $\bR^n$ satisfying the additional assumption. Thus, we obtain

\begin{cor}\label{cor:fat-analytic}
 A compact fat subanalytic subset in $\bR^n$ has local {\rm $\mu$-HCP} of order $q\geq 0$.
\end{cor}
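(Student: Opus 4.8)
The plan is to obtain the corollary as a direct specialization of Theorem~\ref{thm:UPC}. Concretely, it suffices to check that an arbitrary compact fat subanalytic set $E\subset\bR^n$ is uniformly polynomial cuspidal and that the coefficient bound $\sum_{\ell=0}^d\|h_x^{(\ell)}(0)\|$ is uniform over $x\in E$; once both hypotheses of the theorem are in place, the conclusion that $E$ has local HCP of some order $q\geq 0$ follows immediately.

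First I would invoke the fundamental structure result of Paw\l ucki and Ple\'sniak \cite{PP86}: every compact fat subanalytic subset of $\bR^n$ is UPC. Here the fatness assumption $E=\ov{{\rm int}\, E}$ is indispensable, as it ensures that for each vertex $a\in E$ the cusp $E_a$ built from the polynomial map $h_a$ as in \eqref{eq:cusp} actually fills out a neighborhood of $a$ inside $E$ and is not degenerate. The uniform boundedness of the degrees $\deg h_x\leq d$ and of the cusp constants $M,m$ is part of the same construction.

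Next I would verify the extra hypothesis of Theorem~\ref{thm:UPC}, namely the uniform control of $\sum_{\ell=0}^d\|h_x^{(\ell)}(0)\|$. This is exactly the content of \cite[Corollary~6.6, Remark~6.5]{PP86}, where the subanalytic rectilinearization producing the maps $h_x$ is shown to yield coefficients bounded uniformly in terms of the subanalytic data of $E$. With both hypotheses confirmed, Theorem~\ref{thm:UPC} applies and the proof is complete.

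The main obstacle is entirely external to the present paper: it resides in the deep subanalytic geometry of \cite{PP86} needed to construct polynomial cusps of uniformly bounded degree and coefficients at every point of a fat subanalytic set. Within our framework, once Theorem~\ref{thm:UPC} is available, no further analytic estimate is required and the corollary is immediate.
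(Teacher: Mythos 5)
Your proposal is correct and coincides with the paper's own argument: the corollary is deduced there exactly as you describe, by citing \cite[Corollary~6.6, Remark~6.5]{PP86} for the UPC property of compact fat subanalytic sets together with the uniform bound on $\sum_{\ell=0}^{d}\|h_x^{(\ell)}(0)\|$, and then applying Theorem~\ref{thm:UPC}.
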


This corollary combined with Theorem~\ref{thm:characterization-hcp}-(c) gives the proof of Theorem~\ref{thm:UPC-intro}.

Now, to proceed with the proof of Theorem~\ref{thm:UPC} we need the following fact

\begin{lem}\label{lem:L-polynomial} Let $E \subset \bC^k$ be a compact subset and $h: \bC^k \to \bC^n$ be a complex valued polynomial mapping of degree $d$. Then, for every $w\in \bC^k$,
$$
	L_{h(E)}(h(w)) \leq d \cdot L_{E} (w).
$$
\end{lem}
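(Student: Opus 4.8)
The plan is to reduce the inequality directly to the definitions of the two extremal functions by pulling back competitors through $h$. Fix $w\in\bC^k$, and let $f\in\cL(\bC^n)$ be an arbitrary competitor in the definition of $L_{h(E)}$, i.e.\ $f_{|_{h(E)}}\le 0$. I would set
$$
	g:=\tfrac{1}{d}\,f\circ h .
$$
Since $h$ is holomorphic and $f$ is plurisubharmonic, $g$ is plurisubharmonic on $\bC^k$. Moreover $g_{|_E}\le 0$, because for $z\in E$ we have $h(z)\in h(E)$, hence $f(h(z))\le 0$. The crux is to check that $g$ lies in the Lelong class $\cL(\bC^k)$; granting this, $g$ is admissible for $L_E$, so $g(w)\le L_E(w)$, that is $f(h(w))\le d\,L_E(w)$. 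Taking the supremum over all such $f$ gives $L_{h(E)}(h(w))\le d\,L_E(w)$, which is the assertion. (If $f\circ h\equiv-\infty$ at $w$ the inequality $f(h(w))\le d\,L_E(w)$ is trivial, so no harm is done.)

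The only genuine point is the growth estimate placing $g$ in $\cL(\bC^k)$, and this is where the degree $d$ enters. Writing $h=(h_1,\dots,h_n)$ with each $h_j$ a polynomial of degree at most $d$, there is a constant $C>0$ with $|h(w)|^2\le C(1+|w|^2)^d$ for all $w$, whence $1+|h(w)|^2\le C'(1+|w|^2)^d$ for a suitable $C'$. Therefore
$$
	\rho(h(w))=\tfrac12\log\bigl(1+|h(w)|^2\bigr)\le \tfrac12\log\bigl(C'(1+|w|^2)^d\bigr)=d\,\rho(w)+\tfrac12\log C'.
$$
Since $f\in\cL(\bC^n)$ means $f\le\rho+c_f$, I obtain $f\circ h\le d\,\rho+\tfrac12\log C'+c_f$, so that $g=\tfrac1d f\circ h\le\rho+c_g$ with $c_g:=\tfrac1d\bigl(\tfrac12\log C'+c_f\bigr)$. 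Hence $g\in\cL(\bC^k)$, completing the verification needed above.

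I expect no serious obstacle here: the argument is a Lelong-class version of the classical subordination principle, and the factor $d$ is forced precisely by the logarithmic-growth bound $\rho\circ h\le d\,\rho+O(1)$ coming from $|h(w)|\lesssim|w|^d$. The only bookkeeping to keep in mind is that the estimate is one-sided (bounding $L_{h(E)}$ from above), so that pluripolarity of $h(E)$ — which can occur when the image of $h$ lies in a proper subvariety — causes no trouble, as the inequality is obtained competitor-by-competitor rather than through upper-semicontinuous regularization.
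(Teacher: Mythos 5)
Your proof is correct and follows essentially the same route as the paper: pull back each competitor $f\in\cL(\bC^n)$ through $h$, rescale by $1/d$, and verify membership in $\cL(\bC^k)$ via the degree bound on the growth of $h$, then take the supremum. If anything, your global inequality $\rho\circ h\le d\,\rho+O(1)$ is a slightly more careful way to check the Lelong-class condition than the paper's limit comparison at infinity (which is loosely stated when $|h(w)|$ stays bounded along some sequence $|w|\to\infty$), but the underlying idea is identical.
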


\begin{proof} Since $E$ is compact, so is $h(E)$. 
Now, let $v\in \cL(\bC^n)$ be such that $v \leq 0$ on $h(E)$. 
Since $\deg h \leq d$,
$$\begin{aligned}
	\limsup_{|w|\to +\infty}	(v\circ h(w)  - d \log |w|) 
&\leq \limsup_{|w|\to +\infty}	(v\circ h(w)  -  \log |h(w)|) +c_h \\
&\leq c_v + c_h,
\end{aligned}$$
where the second inequality used the assumption of $v \in \cL(\bC^n)$. Hence, 
$v\circ h /d \in \cL(\bC^k)$ and it is negative on $E$. It follows that $v\circ h \leq d \cdot L_E$ and this finishes the proof.
\end{proof}

\begin{rmkx}
The following interesting fact is pointed out by the referee. Even if $E$ is not pluripolar in $\bC^k$, $h(E)$ could be pluripolar in $\bC^n$. However, then $h(E)$ is an algebraic subvariety of $\bC^n$ and there is still content to the statement as in \cite{Sa76, Sa82}.
\end{rmkx}

\begin{proof}[Proof of Theorem~\ref{thm:UPC}] In what follows 
the space $\bR^n$ is identified with the subset $\bR^n+i\cdot 0$ of  $\bC^n$. Let $a\in E$ be fixed and denote by $D(a,r)$ a closed polydisc. 
Observe first that the set $E_a$ defined in \eqref{eq:cusp} satisfies
$$	E_a = \{x\in \bR^n: x = h(t) + M t^m \left(x_1^m,...,x_n^m \right), t\in [0,1], |x_i| \leq 1, i=1,...,n\}.
$$
Let $S\subset \bR \times \bR^{n}$ be the pyramid 
$$
	S = \{(t, tx_1,...,tx_n) \in \bR \times \bR^{n}:  t \in [0, 1], |x_i| \leq 1, i=1,...,n\}.
$$
This is a convex set (with non-void interior in $\bR^{n+1}$) which implies that it has HCP. The crucial observation is that
our cusp $E_a$ is the image of this set under the polynomial projection
$p(t,z): \bC \times \bC^{n} \to \bC^n$ given by
$$
	p(t,z) = h(t) + M (z_1^m, ..., z_n^m).
$$
Clearly, $p(S) = E_a$ and $p(0,0) = h(0) =a$.

To show the {\em local} $\mu$-HCP of some order at $a$  we need to shrink a bit that pyramid. We claim that for each $0< r \leq 1$, we can find $0< r' \leq r$ such that a smaller pyramid
$$
	S(r') := \{ (t,tx) \in \bR \times \bR^{n}: t\in [0,r'], |x_1| \leq r', ..., |x_n| \leq r'\}
$$
satisfies
\[\label{eq:s-pyramid}
	S(r')  \subset p^{-1}(E_a \cap D(a,r)).
\]
Indeed, for $(t,tv) \in S(r') \subset S$, the point $x = h(t) + M t^m\cdot v^m \in E_a$. Moreover,
$$\begin{aligned}
|x-a| &= |h(t) + M t^m\cdot v^m - h(0)| \\
& \leq  |h(t) - h(0)| + M t^m |v|^m \\
&\leq \left(\sum_{\ell=0}^q \|h^{(\ell)}(0)\| \right) r' + n M r', 
\end{aligned}
$$
where we used the fact that $m$ is a positive integer.
Thus we can choose
\[ \label{eq:order-UPC}
	r' = \frac{r}{\sum_{\ell=0}^q \|h^{(\ell)}(0)\| + nM}.
\]
This is the sole place we need the uniform bound for the sum $\sum_{\ell=0}^n \|h^{(\ell)}(0)\|$ that does not depend on the point $a$. Otherwise, the H\"older norm of $L_{E \cap B(a,r)}$ would depend on $a$.

Since $S(r')$ contains a ball of radius $ \tau_n r'$ in $\bR^{n+1}$ with a numerical constant $\tau_n$, it follows from Corollary~\ref{cor:convex} that $S(r')$ has local $\frac{1}{2}$-HCP of order $q=n+1$, i.e.,
\[\label{eq:HCP-convex}
	L_{S(r')} (t,v) \leq \frac{C \de^\frac{1}{2}}{r'^{n+1}}
\]
for every $(t,v) \in S_\de (r') := \{ \ze \in \bC^{n+1}: {\rm dist} (\ze, S(r')) \leq \de\}$ and $C$ does not depend on $r'$ and $\de$.

Moreover, for all $0< \de \leq  r'$, we have   $$P(\de) = \{(t,z) \in \bC\times \bC^n: |t|\leq \de, |z_i| \leq \de, i=1,...,n\} \subset S_\de(r').$$ 
Then, for such a small $\de$, the following inclusions hold
\[\label{eq:inclusion-chain}
	B (a, M \de^m) \subset p (\{0\}\times D(0,\de))\subset p(P(\de)) \subset p ( S_\de (r')).
\]

Now we are ready to conclude the local $\mu$-HCP of $ E_a$. Let $z\in \bC^n$ be such that  ${\rm dist}(z, a) \leq M \de^m$. By \eqref{eq:inclusion-chain} we have $z = p(t,v) \in \bC^n$ for some $(t,v)\in S_\de (r')$. Furthermore, by \eqref{eq:s-pyramid} we have $F:= p(S(r')) \subset E_a \cap D(a,r)$. Combining these facts  with \eqref{eq:HCP-convex} we obtain
$$\begin{aligned}
L_{ E_a\cap D(a,r)}(z) &\leq L_{F} (z) \\
&= L_{F} (p(t,v)) \\
&\leq \max(d,m) \cdot L_{S(r')} (t,v)  \\
& \leq \frac{C \de^\frac{1}{2}}{r'^{n+1}},
\end{aligned}$$
where for the third inequality we used Lemma~\ref{lem:L-polynomial}, and the last constant $C$ does not depend on $r'$ and $a$.
Rescaling $\de := M\de^m \leq r'$, we obtain
$$
	L_{E_a \cap D(a,r)} (z) \leq \frac{C \de^\frac{1}{2m}}{r'^{n+1}}
$$
for every ${\rm dist}(z,a) \leq \de$, where $0<\de \leq r'$. Notice that $r$ and $r'$ are comparable by \eqref{eq:order-UPC}. Hence, $E_a$ has local $\mu$-HCP at $a$ with the exponent $\mu = 1/2m$ and of order $q=n+1$ and so does $E\supset E_a$. This finishes the proof of the theorem.
\end{proof}

\subsection{Compact sets slided transversally by analytic half-disc} 
\label{sec:totally-real}

In this section we provide another class of compact subsets (of Lebesgue measure zero but admitting geometric structure) that have local $\mu$-HCP and of some order. We will see later that they contain generic submanifolds as important examples.

Let 
$$
	\uU_+ = \{\tau \in \bC: |\tau| \leq 1,\; {\rm Im} \tau \geq 0\}
$$
denote the (closed) upper half of the closed unit disc and we denote for $0<\de \leq 1$
$$
	{\rm U}(\de) = \{\tau \in \bC: |\tau|\leq \de\}.
$$
Motivated by \cite{SZ16} and \cite{Vu18} we consider the following class of sets.

\begin{defn} \label{defn:slide-set}
Let $E \subset \bC^n$ be a compact subset and $a\in E$. Assume that there are uniform constants $C_0, M, m$ and $\de_0>0$ (do not depend on $a$) such that for every $0<\de < \de_0$ and $x\in \bC^n$ with ${\rm dist} (x,a) \leq M\de^m$ we can find a holomorphic map $$f_a: \overset{\circ}{\rm U}_+ \to  \bC^n$$ satisfying:
\begin{itemize}
\item
[(a)] $f_a$ is  continuous on $\uU_+$ and $\sup_{U_+} |f_a| \leq C_0$;
\item
[(b)] $f_a(0) = a$ and $f_a([-1,1]) \subset E$;
\item
[(c)]
$x\in f_a (\uU_+ \cap \uU (\de)).$
\end{itemize}
For the sets satisfying the definition we say that $E$ can  be {\em slid transversally by analytic half-disc at $a\in E$}.
Moreover, $E$ is said to be slid transversally by analytic half-disc if $E$ can be slid transversally at every point $a\in E$.
\end{defn}

The condition (b) is to say that the analytic half-disc $f: \uU_+ \to \bC^n$ is attached to $E$ along the interval $[-1,1]$. The condition (c) means the analytic half-disc $f_a: \uU_+ \to \bC^n$ meets  transversally with $E$ so that $f_a(\uU_+ \cap \uU(\de))$ contains an arc joining  $a$ and $x$ (with distance $M\de^m$). 

Note that the idea of analytic disc attached to a generic submanifold in $\bC^n$ is classical in CR-geometry (see \cite{BEP99}). However, the above definition emphasizes on the quantitive estimates.
Also, in this definition it is important to require that the constants $M, m$ and $\de_0$ are independent of the point $a$. For the applications later we will need this independency of all point in the compact set. Geometrically it says that at each point we can attach transversally to the set a closed half-disc of uniform radius, consequently the family of analytic half-disc fills a neighborhood of the given point (in the ambient space). This will be clearly seen in the examples below.

In our setting the half-disc is attached along the real axis which is slightly different from  \cite{SZ16} and \cite{Vu18}. However, we can use a simple conformal map to convert the results obtained there to our setting as we are only interested near the origin. Also our definition seems to be  natural as in the following examples show. 

\begin{expl}\mbox{}
\begin{enumerate}
\item The simplest example 
$E := [-1,1] \subset \bR \subset \bC$. Then, we can choose $C_0=2$, $M=1$, $m=2$, $\de_0=1$. In fact, for 
$
	f(z) = z^2,
$
we have $f([-1,1]) = [0,1]$. Consider the analytic function $f_a: \uU_+ \to \bC$ given by
$$f_a(z) = \begin{cases}
 a + z^2 \quad \text{for } a \in [-1,0], \\
 a - z^2 \quad\text{for } a\in [0,1].
\end{cases}	
$$
Moreover, $x \in f_a( \uU_+ \cap \uU(\de))$ for every $x\in \bC$ with $|x-a| \leq \de^2$, where $0<\de <\de_0=1$. This implies that $E$ is slid transversally by analytic half-disc. We can also  see that 
$$
	A = \sup_{\tau \in [-1,1]} \frac{|f_a(\tau)-f_a(0)|}{|\tau|} \leq 1.
$$
\item
In a similar fashion we can see that a bounded domain with $C^2$-boundary or compact cube in $\bR^n = \bR^n+ i \cdot 0 \subset \bC^n$ are the sets satisfies Definition~\ref{defn:slide-set}. Using this we get another proof of the local $\mu$-HCP with the exponent $\mu = 1/2$ and of order $q=2$ (see Lemma~\ref{lem:uniform-sphere}).
\end{enumerate}
\end{expl}

We will need the following fact about the harmonic measure.
\begin{lem} \label{lem:harmonic-measure} Denote $E = [-1,1] \subset \bR \subset \bC$.
Let $h$ be the harmonic extension of $ 1-{\bf 1}_{E}$ from $\d \uU_+$  into $\uU_+$. Then, 
$$
	h(z) = \sup \left\{ v\in SH(\overset{\circ}{\rm U}_+) \cap C( \uU_+): v \leq 1 - {\bf 1}_E \text{ on  } \d\uU_+ \right\},
$$
and $h$ is Lipschitz continuous at $0 \in E$ (or on a proper compact subinterval).
\end{lem}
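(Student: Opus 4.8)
The plan is to reduce the whole statement to an explicit formula obtained by a conformal change of variables, and then read off both assertions from it. First I would introduce the conformal map $\psi(\tau)=\left(\frac{1+\tau}{1-\tau}\right)^{2}$. The M\"obius map $\tau\mapsto\frac{1+\tau}{1-\tau}$ carries $\uU_+$ onto the first quadrant, sending the diameter $(-1,1)$ to the positive real axis and the open upper semicircle to the positive imaginary axis; squaring then maps the first quadrant biholomorphically onto the upper half-plane $\mathbb{H}$, sending $E=[-1,1]$ to the closed positive real axis and the arc to the negative real axis, while the two corners $\tau=\mp1$ go to $w=0$ and $w=\infty$. In the coordinate $w$ the boundary datum $1-{\bf 1}_E$ becomes the step function equal to $0$ on $(0,\infty)$ and $1$ on $(-\infty,0)$, whose harmonic extension to $\mathbb{H}$ is $\frac{1}{\pi}\arg w$. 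Pulling back yields the explicit candidate
\[\notag
	h(\tau)=\frac{2}{\pi}\,\arg\!\left(\frac{1+\tau}{1-\tau}\right),
\]
which one checks is harmonic in $\overset{\circ}{\rm U}_+$, satisfies $0\le h\le1$, equals $0$ on $(-1,1)$ and $1$ on the open arc, and is continuous on $\uU_+\setminus\{\pm1\}$.

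Next I would verify that $h$ equals the supremum in the statement by proving the two inequalities separately. For $\sup\{\cdots\}\le h$: given any competitor $v\in SH(\overset{\circ}{\rm U}_+)\cap C(\uU_+)$ with $v\le 1-{\bf 1}_E$ on $\d\uU_+$, the function $v-h$ is subharmonic and bounded above in $\overset{\circ}{\rm U}_+$ (as $v$ is continuous on the compact $\uU_+$ and $0\le h\le1$), and satisfies $\limsup_{\tau\to\zeta}(v-h)(\tau)\le0$ at every boundary point $\zeta$ except the corners $\pm1$; since a finite set is polar in the plane, the extended maximum principle forces $v\le h$. For the reverse bound $h\le\sup\{\cdots\}$ I would construct continuous subharmonic competitors increasing to $h$. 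Working in $\mathbb{H}$, choose continuous $g_\delta$ on $\bR$ with $0\le g_\delta\le{\bf 1}_{(-\infty,0)}$, supported in a compact subinterval of $(-\infty,0)$ and increasing to ${\bf 1}_{(-\infty,0)}$ as $\delta\to0$; by the Poisson integral for $\mathbb{H}$ their harmonic extensions $H_\delta$ are continuous up to $\d\mathbb{H}\cup\{\infty\}$, vanish at both $w=0$ and $w=\infty$, satisfy $H_\delta\le\frac1\pi\arg w$, and increase to it on compacta. Pulling back by $\psi$ gives $v_\delta:=H_\delta\circ\psi\in SH(\overset{\circ}{\rm U}_+)\cap C(\uU_+)$, where the compact support of $g_\delta$ is exactly what makes $v_\delta$ extend continuously by $0$ through both corners; moreover $v_\delta\le 1-{\bf 1}_E$ on $\d\uU_+$ and $v_\delta\uparrow h$ on the interior. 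The two inequalities give the claimed identity, and in particular show that the harmonic extension $h$ coincides with the Perron envelope.

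The Lipschitz assertion is then immediate from the explicit formula. Since $\frac{1+\tau}{1-\tau}$ is holomorphic and nonvanishing in a full neighbourhood of any point of the open segment $(-1,1)$, the function $\log\frac{1+\tau}{1-\tau}$ is holomorphic there and $h=\frac{2}{\pi}\,\mathrm{Im}\,\log\frac{1+\tau}{1-\tau}$ is real-analytic up to the real axis; expanding $\log\frac{1+\tau}{1-\tau}=2\tau+O(\tau^{3})$ near $0$ gives $h(\tau)=\frac{4}{\pi}\,\mathrm{Im}\,\tau+O(|\tau|^{3})$, so that $|h(\tau)-h(0)|\le C|\tau|$, and the same estimate holds uniformly on any proper compact subinterval $[-1+\eta,1-\eta]$.

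I expect the main obstacle to be the two corner points $\tau=\pm1$: they are exactly where the boundary datum jumps, so they must be excluded in the maximum-principle step (justified by the polarity of finite sets in the plane) and must be neutralized in the construction of the continuous competitors $v_\delta$, which is precisely why $g_\delta$ is taken with compact support in $(-\infty,0)$, forcing $H_\delta$ to vanish at both $w=0$ and $w=\infty$ and hence $v_\delta$ to be continuous through both corners. Away from these corners everything is routine once the conformal model is in place.
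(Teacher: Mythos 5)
Your proposal is correct and rests on the same key object as the paper's proof: the explicit formula $h(\tau)=\frac{2}{\pi}\arg\bigl(\tfrac{1+\tau}{1-\tau}\bigr)$, obtained from the conformal model of the half-disc, from which the Lipschitz bound at $0$ (and on proper compact subintervals) is read off. The only difference is one of detail: the paper dismisses the identification of $h$ with the envelope as ``a classical result,'' whereas you prove it from scratch — conformal transfer to the upper half-plane, the extended maximum principle off the polar corner set $\{\pm 1\}$, and compactly supported approximants $v_\delta$ handling the corners — which is a correct, self-contained filling-in of that citation.
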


\begin{proof} The harmonicity of the envelope is a classical result. Moreover, $h(z)$ is given by  an explicit formula 
$$
	h(z) = \frac{2}{\pi} \arg \left(\frac{1+z}{1-z} \right).
$$
This function  is clearly Lipschitz near $0$. 
\end{proof}

It is a classical fact that  $$L_{[-1,1]} (z) = \log \left|z + \sqrt{z^2-1}\right|,$$
where the branch of the square root is chosen so that $|z+ \sqrt{z^2-1}|\geq 1$ in the whole plane  (see. e.g. \cite[Lemma~5.4.2]{Kl91}). This function is (optimal) $\frac{1}{2}$-H\"older continuous.
We obtain the following generalization.

\begin{prop}\label{prop:1/m-Holder} Assume that $E \subset \bC^n$ is a compact set satisfying the conditions {\rm (a), (b)} and {\rm (c)} in Definition~\ref{defn:slide-set} at every point $a\in E$. Then, $L_E$ is $1/m$-H\"older continuous.
\end{prop}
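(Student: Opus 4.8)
The plan is to reduce, via B\l ocki's inequality \eqref{eq:local-modulus-of-continuity}, to a one-sided bound on $L_E$ near points of $E$, and then to produce that bound by transplanting a competing function to the half-disc $\uU_+$ and majorizing it by the harmonic measure of Lemma~\ref{lem:harmonic-measure}. By \eqref{eq:local-modulus-of-continuity} it suffices to find uniform constants $C>0$ and $t_0>0$ with $\vpi_E'(t)=\sup_{a\in E}\sup_{|z-a|\le t}L_E(z)\le C\,t^{1/m}$ for $0<t\le t_0$. So I would fix $a\in E$ and $z$ with $|z-a|\le t$, and set $\de=(t/M)^{1/m}$, which we may assume satisfies $\de<\de_0$. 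Condition (c) of Definition~\ref{defn:slide-set} then supplies a half-disc $f_a$ and a point $\tau\in\uU_+$ with $|\tau|\le\de$ and $z=f_a(\tau)$.

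Next I would transplant an arbitrary competitor to the disc. Let $v\in\cL$ with $v\le 0$ on $E$, and set $u:=v\circ f_a$, which is subharmonic on $\overset{\circ}{\rm U}_+$. Since $f_a([-1,1])\subset E$ and $v$ is upper semicontinuous with $v\le 0$ on $E$, we have $\limsup u\le 0$ along the diameter $(-1,1)$, while along the semicircle $\limsup u\le N:=\max\{0,\sup_{\uU_+}u\}$. Let $h$ be the harmonic measure of Lemma~\ref{lem:harmonic-measure}, so $h=0$ on $(-1,1)$, $h=1$ on the semicircle, and $h$ is Lipschitz at $0$ with $h(0)=0$. Because $N\ge0$, the function $u-Nh$ is subharmonic on $\overset{\circ}{\rm U}_+$ with nonpositive boundary $\limsup$, so the maximum principle gives $u\le Nh$ on $\uU_+$. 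Evaluating at $\tau$ and using the Lipschitz bound $h(\tau)\le C_0|\tau|\le C_0\de$ then yields
$$ v(z)=u(\tau)\le N\,C_0\,\de. $$

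The hard part will be to bound $N$ by a constant independent of both $v$ and $a$, and this is where non-pluripolarity of $E$ must enter. The key observation is that every competitor satisfies $v\le L_E\le L_E^*$ on all of $\bC^n$, whence $N\le\max\{0,\sup_{\uU_+}L_E^*\circ f_a\}\le\max\{0,\sup_{B(0,R_0)}L_E^*\}=:N_0$, where $B(0,R_0)$ is a fixed ball containing $\bigcup_{a\in E}f_a(\uU_+)$. The point to be checked carefully is precisely this uniform confinement of all the half-discs to one ball: the uniformity of the constants $M,m,\de_0$, together with $f_a([-1,1])\subset E$ and continuity of $f_a$ on the compact $\uU_+$, is what keeps the images bounded, and this is evident in the examples. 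Since $E$ is non-pluripolar, $L_E^*\in\cL$ is locally bounded above, so $N_0<\infty$ independently of $v$ and $a$.

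Finally, I would take the supremum over all admissible $v$ to obtain $L_E(z)\le N_0 C_0\de=N_0C_0M^{-1/m}\,t^{1/m}$, and then the supremum over $a\in E$ and over $z$ with $|z-a|\le t$ to conclude $\vpi_E'(t)\le C\,t^{1/m}$ with $C=N_0C_0M^{-1/m}$. Substituting this into \eqref{eq:local-modulus-of-continuity} gives $|L_E(z)-L_E(w)|\le C|z-w|^{1/m}$, i.e. $L_E$ is $1/m$-H\"older continuous. In summary, the transplantation and the harmonic-measure majorization are routine once Lemma~\ref{lem:harmonic-measure} is available; the only genuinely delicate step is the uniform sup bound $N_0$, which rests on the half-discs being confined to a fixed compact set and on $E$ being non-pluripolar.
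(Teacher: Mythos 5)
Your proof is correct and follows essentially the same route as the paper's: reduction via B\l ocki's inequality \eqref{eq:local-modulus-of-continuity}, transplantation to the half-disc through $f_a$, and majorization by the harmonic measure of Lemma~\ref{lem:harmonic-measure} together with its Lipschitz bound at $0$. The only difference is minor but in your favor: you compose each competitor $v$ with $f_a$ and take the supremum at the end, whereas the paper composes $L_E$ itself (tacitly treating $L_E\circ f_a$ as subharmonic and continuous), and both arguments rest on the same tacit point — which you at least flag explicitly — that the images $f_a(\uU_+)$ stay in a fixed bounded set so that the sup bound is uniform in $a$.
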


\begin{proof} It is enough to prove that $L_E$ is $\frac{1}{m}$-H\"older continuous on $E$ by B\l ocki's result \eqref{eq:local-modulus-of-continuity}, i.e.,   there exists $C> 0$ such that  for every $a\in E$,
$$
	L_E(z) \leq  C |z - a|^\frac{1}{m}
$$
for  $|z-a| \leq \de.$
In fact, for $z \in \bC^n$ such that ${\rm dist}(z,a) \leq M\de^m$ there exists $\tau \in  \uU_+ \cap  \uU( \de)$ such that $z = f_a(\tau)$. Therefore,
$$
	L_{E} (z)  = L_{E} (f_a (\tau)) 
$$
Observe that $v :=L_{E} \circ f_a$ is subharmonic in the interior of $\uU_+$ and $v\in C^0(\uU_+)$. It satisfies  $\sup_{U_+} v \leq C$ for a constant depending only on $E$, and  $v \equiv 0$ on $[-1,1]$. In particular, $v/C$ is a candidate for the left hand side of the envelope in  Lemma~\ref{lem:harmonic-measure}. Hence, 
$$ v(\tau) \leq C  h(\tau)\leq  C \de,
$$
where $h(\tau)$ is the harmonic measure with respect to the interval $[-1, 1] \subset \d \De_+$ and we used the fact $|h(\tau)| \leq c_1 |\tau| \leq c_1\de$.
Combining with the above identity and  rescaling we conclude that 
$
	L_{E} (z) \leq C \de^\frac{1}{m}.
$
\end{proof}

\begin{remark} \mbox{}
\begin{itemize}
\item
[(a)] 
Suppose that the $\mu$-H\"older coefficient/norm, $0<\mu\leq 1$, of the analytic half-disc $f_a$ at $0$ is bounded by a constant $A$ that is independent of the point $a$, then  from the above proof we can easily get  the local $\mu$-H\"older regularity at $a\in E$ with the exponent $\mu = 1/m$ and of order $q = 1/\mu$, i.e., 
$$
	L_{E\cap B(a,r)} (z) \leq \frac{C A^\frac{1}{\mu}\de^\frac{1}{m}}{r^\frac{1}{\mu}}  , \quad {\rm dist}( z, a) \leq \de,
$$
where $C$ is independent of $r$ and the point $a$.

\item
[(b)]
Suppose $E$ satisfies (a), (b) and (c) in Definition~\ref{defn:slide-set} at $a\in E$ with an analytic half-disc $f_a$ whose the $\mu$-H\"older coefficient at $0$, where $0<\mu \leq 1$, 
$$A= A (\mu):=\sup_{\tau \in [-1,1]} \frac{|f_a(\tau) -f_a(0)|}{|\tau|^\mu} <C$$
for a uniform constant $C$ which does not depend on the point $a$.
Then, for every small $r>0$, the set $E \cap B(a,r)$ can be slid transversally at $a$ by an analytic half-disc $g_a: \uU_+ \to \bC^n$ given by $$g_a (\tau) = f_a\left((r/A)^\frac{1}{\mu} \tau \right).$$
Moreover, if $C_0, M, m, \de_0$ are uniform constants satisfying $(c)$ for $E$ at $a$, then the constants
$C_0,  M (r/A)^\frac{m}{\mu}, m, \de_0$ satisfy (c) for $E\cap B(a,r)$ at $a$.

\end{itemize}
\end{remark}

It is proved by Vu \cite[Proposition~2.5]{Vu18} and by Sadullaev and Zeriahi \cite{SZ16} that a $C^2$-smooth generic submanifold in $\bC^n$ (or in a complex manifold) can be slid  transversally by analytic half-disc together with uniform control of Lipschitz norm at $0$ of $f_a$.
As a consequence the extremal functions of these generic submanifolds are Lipschitz continuous. Furthermore, these manifolds have  local $\mu$-HCP with the exponent $\mu=1$ and of order $q=1$.
It will be interesting to obtain more examples of sets satisfying Definition~\ref{defn:slide-set}.

\section{Applications in equidistribution speed for Fekete points}\label{sec:app}

In dimension $n=1$, for a non-polar compact set $E\subset \bC$, the function $L_E$ coincides with the Green function of the unbounded component of $\bC \setminus E$ with the pole at infinity.  A classical result saying that  the sequence of the probability counting  measures  for Fekete points of $E$, called \em Fekete's measures\rm, converges weakly to the equilibrium measure $\mu_{\rm eq}$  of $E$ as the number of points goes to infinity (see, e.g \cite{BBL92}).  The speed of convergence of the sequence can be also quantified for a compact domain whose boundary is smooth. 

The analogous problems regarding the weak convergence and speed of convergence of the Fekete measures of a compact subset in $\bC^n$, $n\geq 1$, had been open for a long time. The weak convergence was obtained in a deep work of Boucksom, Berman and Witt-Nystrom \cite{BBW11}. The speed of convergence was proved \cite{LO16} for $K=X$ a compact projective manifold and $
\phi$ a strictly smooth plurisubharmonic weight. Later, Dinh, Ma and Nguyen \cite{DMN} obtained the estimate for a large class of compact sets that satisfies the $(\Cc^\al, \Cc^{\al'})$-regularity (Definition~\ref{defn:DMN-regular}).

Let $\cP_d(\bC^n)$ be the set of complex valued polynomials of degree at most $d$. Then its dimension is $N_d = \binom{n+d}{n}$, and let $\{e_1,...,e_{N_d}\}$ be an ordered system of all monomials $z^{\al}:= z^{\al_1}_1 \cdots z_n^{\al_n}$ with $|\al| = \al_1 +\cdots \al_n \leq d$, where $\al_i \in \bN$. For each system $x^{(d)} = \{x_1, ..., x_{N_d}\}$ of $N_d$ points of $\bC^n$ we define the generalized Vandermonde matrix ${\rm VDM} (x^{(n)})$ by
$$
	{\rm VDM} (x^{(d)}):= \det [e_i(x_j)]_{i,j=1,...,N_d}.
$$

Let $K \subset \bC^n$ be a non-pluripolar  compact subset. Following \cite{DMN} we say that  a {\em Fekete configuration of order $d$} for $K$ is  a system $\xi^{(d)}=\{\xi_{1},....,\xi_{N_d}\}$ of $N_d$ points of $K$ that maximizes the function $|{\rm VDM}(x^{(d)})|$ on $K$, i.e.,
$$	
	\left| {\rm VDM}(\xi^{(d)}) \right| = \max \left\{ \left|{\rm VDM}(x^{(d)}) \right|: x^{(d)}\subset K\right\}.
$$

Given a Fekete configuration 
$\xi^{(d)}$ of $K$, we  consider the probability measure on $\bC^n$ defined by
$$
	\mu_d := \frac{1}{N_d} \sum_{j=1}^{N_d} \de_{\xi_j},
$$
where $\de_x$ denotes the Dirac measure concentrated at the point $x$. It is called Fekete's measure of order $d$  by \cite[Definition~1.4]{DMN}. 
It is known for $n=1$ that $\{\mu_d\}$ converges weakly to the equilibrium measure $\mu_{\rm eq}$ of $K$ as $d$ goes to infinity. In a fundamental paper  Berman, Boucksom and Witt Nystrom \cite{BBW11}  proved the generalization of this result for $n\geq 2$. Namely, 
\[\label{eq:equidistribution}
	\lim_{d\to \infty}\mu_d = \mu_{\rm eq}, \qquad\text{where }  \mu_{\rm eq}= \frac{(dd^c L_K^*)^n}{\int_{\bC^n} (dd^c L_K^*)^n}, 
\]
in the weak topology of measures. This result coincides with the classical one for  $n=1$ and it was listed as an open problem in \cite[15.3]{Siciak82} and \cite[Problem~3.3]{ST97}. 

In fact this problem is considered as a special case of $K \subset \bC^n \subset \bP^n$ of a very general  reformulation in the framework of a big line bundle over a complex manifold in \cite{BBW11}. It is possible by observing  that 
the space $\cP_d(\bC^n)$ is isomorphic to the space of homogeneous polynomials of degree at most $d$ in $(n+1)$-variable $\cH_d(\bC^{n+1})$. The latter space can be identified the space of global holomorphic section $H^0(\bP^n, \cO(d))$, where $\cO(d)$ is the $d$-th tensor power of the tautological line bundle $\cO(1)$ over $\bP^n$.  We refer the readers to \cite{Le10} for a self-contained proof which derived from \cite{BB10} and \cite{BBW11} using only (weighted) pluripotential theory in $\bC^n$.  

Note again that the regularity of weighted extremal functions is a local property and it  is invariant under biholomorphic maps. Without loss of generality  we restrict ourself to the compact subset in $\bC^n$ as in Section~\ref{sec:regularity} (see also Appendix~\ref{sec:appendix}).  Hence, an immediate consequence of 
the characterizations in Theorem~\ref{thm:characterization-hcp}  is

\begin{lem}
 All local $\al$-{\rm HCP} compact subsets of order $q$ in $\bC^n\subset \bP^n$ are $(\Cc^{\al}, \Cc^{\al'})$-regular, where $\al'$ is explicitly computed in terms of $\al$ and $q$.
 \end{lem}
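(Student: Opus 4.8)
This is an immediate consequence of the results established above, obtained by chaining Theorem~\ref{thm:local-regularity}, Lemma~\ref{lem:cap-density} and Corollary~\ref{cor:continuity-W}. The plan is to first extract from the local HCP hypothesis the two ingredients required to run the weighted estimate---H\"older continuity of the unweighted extremal function together with a uniform density in capacity---and then feed them into the weighted regularity corollary for an arbitrary admissible weight.

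First I would fix such a compact set $E$ and an arbitrary $\al$-H\"older continuous weight $\phi$. Since $E$ has local $\al$-HCP of order $q$, Lemma~\ref{lem:cap-density} shows that $E$ has a uniform density in capacity, and the sufficient direction of Theorem~\ref{thm:local-regularity}-(b) shows that $V_E$ is H\"older continuous; tracking the exponent through the proof of that theorem, $V_E$ is in fact $\mu_0$-H\"older continuous with $\mu_0 = \al^2/(1+q)$. In particular $E$ is non-pluripolar, so the hypotheses of the weighted corollary are in place. I would then invoke Corollary~\ref{cor:continuity-W}-(b): since $E$ has a uniform density in capacity and both $V_E$ and $\phi$ are H\"older continuous (after replacing the exponent of $\phi$ by $\min\{\al,\mu_0\}=\mu_0$ if necessary, as $\mu_0\leq \al$), we conclude that $V_{E,\phi}$ is $\al'$-H\"older continuous. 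The exponent $\al'$ is supplied explicitly by Remark~\ref{rmk:exponent-W} as a function of $\mu_0$ and the density exponent, hence ultimately of $\al$ and $q$ alone. Since $\phi$ was an arbitrary $\al$-H\"older weight, $E$ is $(\Cc^\al,\Cc^{\al'})$-regular by Definition~\ref{defn:DMN-regular}.

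There is essentially no analytic obstacle here, as the substantive work is already contained in the cited results; the one point deserving care is the bookkeeping of exponents. Specifically, the order $q$ in the local HCP hypothesis and the exponent governing the uniform density in capacity produced by Lemma~\ref{lem:cap-density} differ by the dimensional factor $n$, and these must be propagated consistently through Theorem~\ref{thm:local-regularity}-(b) and Remark~\ref{rmk:exponent-W} so that the final $\al'$ is genuinely determined by $\al$ and $q$.
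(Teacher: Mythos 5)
Your proposal is correct and follows essentially the same route as the paper: the paper also obtains the lemma by combining the sufficient direction of Theorem~\ref{thm:local-regularity}-(b) (which already incorporates Lemma~\ref{lem:cap-density}) with Corollary~\ref{cor:continuity-W}-(b), and then computes $\al' = \al''^2/(\al''+2+q)$ with $\al'' = \al^2/(1+q)$ via Remark~\ref{rmk:exponent-W}, exactly as you do. Your added care about taking the minimum of the two H\"older exponents and about the dimensional factor $n$ entering through the capacity-density exponent is sound bookkeeping that the paper leaves implicit.
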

 
 By Remark~\ref{rmk:exponent-W} and the proof of Theorem~\ref{thm:characterization-hcp} we can compute the exponent $\al'= \al''^2/(\al''+2+q)$ where $\al'' = \al^2/(1+q)$. This combined with \cite[Theorem~1.5]{DMN} shows that we have a large number of new examples from  
 Sections~\ref{sec:HCP} \ref{sec:UPC},  \ref{sec:totally-real} for which the following estimate for speed of convergence holds. 
 
\begin{thm}\label{thm:equidistribution} Let $K \subset \bC^n$ be a local $\al$-{\rm HCP} compact subset of order $q$. Then, there exists a constant $C = C(K, \al)$ such that for every test function $v\in \Cc^{\al}$ (the H\"older space on $\bC^n$) and for every Fekete configuration $\xi^{(d)}$ of $K$, 
$$
	|\lc \mu_d - \mu_{\rm eq}, v\rc|  \leq C \|v\|_{\Cc^{\al}} d^{- \al'}.
$$
\end{thm}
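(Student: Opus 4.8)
The plan is to reduce the statement to two facts already assembled in this section: the regularity dictionary established above and the quantitative equidistribution theorem of Dinh, Ma and Nguyen. There is essentially no fresh analysis to perform; the work is in verifying that the hypotheses of \cite[Theorem~1.5]{DMN} are met and that the exponent matches.

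First I would invoke the Lemma immediately preceding the theorem. Since $K$ is local $\al$-HCP of order $q$, that Lemma guarantees $K$ is $(\Cc^\al, \Cc^{\al'})$-regular in the sense of Definition~\ref{defn:DMN-regular}, with $\al'$ determined explicitly by $\al$ and $q$. Tracing the chain that produces this regularity: Theorem~\ref{thm:local-regularity}-(b) converts the local HCP into H\"older continuity of $V_K$ together with a uniform density in capacity; Corollary~\ref{cor:continuity-W}-(b) then upgrades this to H\"older continuity of every weighted extremal function $V_{K,\phi}$ for $\al$-H\"older $\phi$; and Remark~\ref{rmk:exponent-W} tracks the exponent, yielding $\al' = (\al'')^2/(\al'' + 2 + q)$ with $\al'' = \al^2/(1+q)$.

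Second I would feed this regularity directly into \cite[Theorem~1.5]{DMN}. That result asserts precisely that for a $(\Cc^\al, \Cc^{\al'})$-regular compact subset, every test function $v \in \Cc^\al$ satisfies $|\lc \mu_d - \mu_{\rm eq}, v\rc| \leq C \|v\|_{\Cc^\al}\, d^{-\al'}$ with $C$ depending only on $K$ and $\al$. As $K$ meets the hypothesis, the asserted estimate follows immediately.

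The one point requiring care, and what I expect to be the only nontrivial bookkeeping, is the identification of the ambient geometric data. The theorem in \cite{DMN} is phrased on $\bP^n$ with the Fubini-Study metric and the bundle $\cO(1)$, whereas $K$ sits in the affine chart $\bC^n \subset \bP^n$. I would note, as recorded in Remark~\ref{rmk:regularity} and Remark~\ref{rmk:equivalence-notions}, that the regularity is a biholomorphic invariant independent of the reference metric, so the $(\Cc^\al, \Cc^{\al'})$-regularity transferred to $\bP^n$ is exactly the input that \cite{DMN} require, and the equilibrium measure $\mu_{\rm eq}$ of \eqref{eq:equidistribution} coincides with their normalization. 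The analytic substance lives entirely in the cited theorem.
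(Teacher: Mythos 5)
Your proposal is correct and follows essentially the same route as the paper: the Lemma preceding the theorem (local $\al$-HCP of order $q$ implies $(\Cc^\al,\Cc^{\al'})$-regularity, with $\al'=(\al'')^2/(\al''+2+q)$, $\al''=\al^2/(1+q)$, tracked through Theorem~\ref{thm:local-regularity}-(b), Corollary~\ref{cor:continuity-W}-(b) and Remark~\ref{rmk:exponent-W}) combined with \cite[Theorem~1.5]{DMN}, together with the observation that the regularity transfers from the affine chart $\bC^n$ to $\bP^n$ by biholomorphic invariance. Nothing is missing.
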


Notice that the holomorphic maps which preserve  HCP are characterized in  \cite{Pi21}. It is likely that the criterion can be extended to the local $\mu$-HCP case. If it is true, then we will obtain compact sets, via nice holomorphic maps, which  are $(\Cc^\al, \Cc^{\al'})$-regular.
 
\section{Appendix}
\label{sec:appendix}

In this section we give the proof of Theorem~\ref{thm:characterization-hcp}-(c) without assuming that it is a compact set contained in $\bC^n\subset \bP^n.$

Let  $(X,\om)$ be a compact K\"ahler manifold of dimension $n$. 
Let $K\subset X$ be a non-pluripolar compact subset and $a\in K$. 
Consider a holomorphic coordinate ball $(\Om, \tau)$ in $X$ centered at $a$,  
\[\label{eq:c-ball}\tau: \Om \to \bB:=\bB(0,1) \subset \bC^n
\] is a biholomorphic map with $\tau(a) =0$. Suppose that $$\om = dd^c \rho$$ for a strictly psh function $\rho\in PSH(\Om) \cap C^\infty(\ov\Om)$, where   $\rho$ attains its minimum $\inf_{\Om}\rho = \rho(a) =0$. (In general we can do this by shrinking $\Om$ and modifying $\rho$ by a pluriharmornic function.)  
As the H\"older continuity of $V_{K}$ is independent of the metric (Remark~\ref{rmk:regularity-c}), by rescaling   we may also assume   
$$0 \leq \rho \leq 1 \quad\text{ on } \ov\Om.$$ 

We recall some classical facts in pluripotential theory. Let $E \subset \bB(0,\frac{1}{2})$ be a compact set. The weighted zero-one relative extremal function is given by
\[\label{eq:zero-one-C}\notag
	u_{E}(z) = \sup \left\{ v(z) : v\in PSH(\bB), \, v_{|_E} \leq 0, v\leq  1 \right\}.
\]
The classical result in pluripotential theory \cite[Proposition~5.3.3]{Kl91} tells us that the extremal function $L_E$ satisfies
\[\label{eq:compare-C}
	C_1 \, u_E \leq L_E \leq C_2 u_E \quad\text{in  } \bB,
\]
where $C_1, C_2$ are two positive constants depending only on $E$ and $\bB$.

Now, let $\phi$ be a continuous function on $\ov \bB$.
For simplicity we assume  that the weight function satisfies 
\[\label{eq:weight-c}
	0\leq \phi \leq 1.
\] 
The (positive) weighted relative extremal function  is defined by 
\[\label{eq:zero-one-CW}\notag
	u_{E,\phi} (z) = \sup \left\{ v(z) : v\in PSH(\bB), \, v_{|_E} \leq \phi, v\leq  \phi+ 1 \right\}.
\]
(Notice that these functions are different from the ones in \cite{CKZ} and \cite{Al-thesis}). It follows from the definitions and the assumption on $\phi$ that
\[\label{eq:re-ext-compare-c}
	u_{E} +\inf_E\phi \leq u_{E,\phi} \leq  2u_E + \sup_E \phi. 
\]
Note that this inequality is very similar to \eqref{eq:weight-vs-unweight} and Lemma~\ref{lem:weight-vs-unweight}-(a). Moreover, the extra constant 2 on the right hand side will not cause any harm in study the regularity of the subset $E$. 
Furthermore, we also have the monotonicity for  $E\subset F$ and $0\leq \phi \leq \psi \leq 1$, 
\[\label{eq:re-ext-monotonicity}
	u_{F,\phi} \leq u_{E,\phi} \leq u_{E, \psi}.
\]

We are ready to prove  the relation between the global and local extremal functions in a coordinate ball $\Om\subset X$  defined  in \eqref{eq:c-ball}.

\begin{lem} \label{lem:equivalence-notions}  Let $K\subset\subset \Om$ be a non-pluripolar compact set and $\wh \rho := \rho\circ \tau^{-1}$.  Then, there exist two positive constants $m, M$ depending  only on $K,\Om$ and $\rho$ such that 
$$
 m\,  u_{\tau(K),\wh\rho} (z) \leq (V_{K} +\rho) \circ \tau^{-1} (z) \leq M \, u_{\tau(K),\wh\rho} (z) \quad \text{on } \bB.
$$
\end{lem}
\begin{proof}  We follow closely the argument in \cite[Proposition~5.3.3]{Kl91}. Since $K$ is a  non-pluripolar, we have $M_\Om= \sup_\Om V_K^* <+\infty$. 
Let $w\in PSH(X,\om)$ such that $w\leq 0$ on $K$. 
Hence, $v: =(w +\rho)\circ \tau^{-1}$ is  plurisubharmornic in $\bB$, $v \leq \wh\rho$ on $\tau(K)$ and $ v  \leq M_\Om +1 =:M$. Since $\wh \rho \geq 0$, $$v \leq M u_{\tau(K),\wh\rho}$$
and therefore the second inequality follows.  Before continuing the proof of the first inequality let us state a useful observation (see also \cite[Corollary~1.7]{BBW11} and \cite[Theorem~2.7]{DMN} for different proofs).

\begin{cor}\label{cor:L-reg-smooth-domain} Let $D \subset\subset X$ be a smooth domain. Then, $V_{\ov D}$ is continuous on $X$.
\end{cor}
\begin{proof} Since $V_{\ov D}=0$ on $\ov D$, it is continuous in $D$. By Corollary~\ref{cor:semicontinuity} it remains to verify $V^*_{\ov D}(a)=0$ at  $a\in \d D$. Let us fix such a point $a$ and consider $\Om$ the local coordinate unit ball center at $a$ as in \eqref{eq:c-ball}. Set $K = \ov D \cap \tau^{-1}(\ov\bB(0,1/2))$. By monotonicity Proposition~\ref{prop:elementary}-(a) it  is enough to show that $V_{K}^*(a)=0$. To this end we note that $\tau(K) \subset \bar\bB(0,1/2)$ is a smooth domain near the origin $0=\tau(a)$. So, it is locally $L$-regular at $0$  by Example~\ref{expl:acc}. Hence, $u_{\tau(K)\cap \bar\bB(0,\veps)}^*(0) =0$ for every $\veps>0$ small by the inequality \eqref{eq:compare-C}. Next, using the second inequality above, the monotonicity \eqref{eq:re-ext-monotonicity} and \eqref{eq:re-ext-compare-c} we get
$$\begin{aligned}
	V_{K} \circ \tau^{-1}(z) +  \wh\rho(z) 
& \leq M u_{\tau(K),\wh\rho} (z)	\\
&\leq M u_{\tau(K)\cap \bar\bB(0,\veps),\wh\rho}  (z)\\
&\leq M [2 u_{\tau(K) \cap \bar\bB(0,\veps)} (z)+ \sup_{\bar\bB(0,\veps)} \wh\rho].
\end{aligned}
$$
Using the fact $\wh\rho (0)=0$ we get 
$
	V_K^*(a) \leq 2M u_{\tau(K)\cap \bar\bB(0,\veps)}^*(0) + Mc_3 \veps = Mc_3\veps,
$
where $c_3$ is the Lipschitz norm of $\wh\rho$ in $\bar\bB$. As $\veps >0$ arbitrary we have $V_K^*(a)=0$ and the proof of the corollary is completed.
\end{proof}

Let us complete  now the proof the first inequality in the lemma. By the assumption $2m :=\inf_{\d \Om} \rho >0$. Take $0<\veps<m$. For $\de>0$ denote $K_\de = \{ x\in X: {\rm dist}(x, K) \leq \de\}$ where ${\rm dist} (\cdot, K)$ is the distance induced from the metric $\om$. If $\de>0$ is small enough, then $K_\de\subset\subset \Om$ is a smooth domain. Hence, by Corollary~\ref{cor:L-reg-smooth-domain}  we have $V_{K_\de}$ is a continuous $\om$-psh function. Let $v \in PSH(\bB)$ be such that $v \leq \wh\rho$ on $\tau(K)$ and $v\leq \wh\rho+1$ in $\bB$. Define
$$
\wt v =
\begin{cases}
	\max\{ (m-\veps) v \circ \tau -\rho, V_{K_\de}\} \quad &\text{in } \Om,\\
	V_{K_\de} \quad&\text{in } X\setminus\Om.
\end{cases}
$$
Since $\limsup_{x\to \d\Om} [(m-\veps) v\circ \tau (x) - \rho(x)] \leq 2(m-\veps) - \inf_{\d \Om} \rho \leq -2\veps$, we have $(m-\veps) \,v \circ \tau -\rho \leq V_{K_\de}$ near $\d\Om$. Hence, 
 $\wt v \in PSH(X,\om)$ and $\wt v \leq 0$ on $K$. Hence, 
$$ (m-\veps) v \circ \tau -\rho \leq V_K \quad \text{ in }\Om.$$ 
By letting $\veps\to0$, we get the first inequality.
\end{proof}

For a general compact set $K\subset X$, we know from  Theorem~\ref{thm:characterization-hcp}-(a)  that $V_K$ is $\mu$-H\"older continuous at $a$ if and only if $V_{K\cap B(a,r)}$ is $\mu$-H\"older continuous for a closed coordinate ball $B(a,r)= \tau^{-1}(\bB(0,r))$ centered at $a$ and small radius $r>0$. Therefore, we may assume that $K$ is contained in this  holomorphic coordinate ball $(\Om,\tau)$ above and $K\subset B(a,1/2):= \tau^{-1}(\ov\bB(0,1/2))$. Clearly, for $0<r<1/2$
\[
	\tau(K\cap B(a,r)) = \tau(K) \cap \ov\bB(0,r) \subset \ov\bB (0,1/2).
\]
We now state the characterization of the local $\mu$-HCP.  
\begin{lem} $\tau(K)$ has local $\mu$-HCP of order $q$ at $\tau(a)=0$ if and only if $K$ has a uniform density in capacity  \eqref{eq:cap-density} at $a$ and 
$V_K$ is $\mu'$-H\"older continuous at $a$ for some $\mu'>0$.
\end{lem}

\begin{proof} Let us write  $E := \tau(K)$, $\ov\bB_r:= \ov\bB(0,r)$ and assume that $E$ has local $\mu$-HCP at $\tau(a)=0$, i.e.,
\[\label{eq:a-loc-hcp-1}
	L_{E \cap \ov\bB_r} (z) \leq \frac{C \de^\mu}{r^q}, \quad |z| \leq \de \leq \de_0, \quad 0< r \leq r_0.
\]
By the proof of Lemma~\ref{lem:cap-density} $E$ satisfies the property \eqref{eq:cap-density-sufficient-loc} at $\tau(a)=0$. Equivalently, $K$ satisfies \eqref{eq:cap-density} at $a$.
Lemma~\ref{lem:equivalence-notions} tells us that
\[\label{eq:equi-ext-functions-a}
	V_K \circ\tau^{-1}(z) \leq V_{K}\circ \tau^{-1} (z) + \wh \rho (z) \leq M u_{E, \wh \rho} (z).
\]
Furthermore, the monotonicity \eqref{eq:re-ext-compare-c} combined with \eqref{eq:compare-C} and \eqref{eq:a-loc-hcp-1}  implies for $0<r \leq r_0$
 $$\begin{aligned}
 	u_{E \cap \ov\bB_r, \wh \rho} (z) 
&\leq 	2u_{E \cap \ov\bB_r} (z) + \sup_{E \cap \ov\bB_r} \wh\rho \\
&\leq		\frac{2}{C_1} L_{E \cap \ov\bB_r} (z) + c_3 r \\
&\leq 	C \frac{\de^\mu}{r^q} + c_3 r.
 \end{aligned} $$
where $c_3$ is the Lipschitz norm of $\wh \rho$ on $\bB$. We can choose $r= \de^\eps$ with $\eps = \mu/(q+1)$ to conclude that  for $\mu' = \mu^2/(1+q)$,
$$
	u_{E,\wh \rho} (z) \leq C \de^{\mu'},\quad  |z| \leq \de \leq \de_0.
$$
It follows from this and \eqref{eq:equi-ext-functions-a} that
$$
	V_{K}  \circ \tau^{-1} (z) \leq M C \de^{\mu'} \quad\text{for } |z| \leq \de
$$
which is the H\"older continuity of $V_K$ at $a$. 

Conversely, assume now  $V_K$ is $\mu$-H\"older continuous and $K$ has a uniform density in capacity \eqref{eq:cap-density} at $a$. This implies from 
\eqref{eq:norm-loc} that for ${\rm dist} (x, a) \leq \de \leq \de_0$ and $0<r\leq r_0$,
$$
	V_{K\cap B(a,r)} (x) \leq  \frac{A}{\vka} \frac{\de^\mu}{r^{2+q}}.
$$
Since $\inf_{E\cap \ov\bB_r} \wh\rho=\wh\rho(0)=0$, we have for $|z| \leq \de \leq \de_0$,
$$
	\wh \rho (z) - \inf_{E\cap \ov\bB_r} \wh\rho \leq c_3 \de.
$$
Combining this with \eqref{eq:compare-C}, \eqref{eq:re-ext-compare-c} and Lemma~\ref{lem:equivalence-notions} we arrive at
$$\begin{aligned}	
	L_{E \cap \ov\bB_r} &\leq C_2 u_{E\cap \ov\bB_r} \leq C_2 \left(u_{E\cap \ov\bB_r,\wh\rho} - \inf_{E\cap \ov\bB_r} \wh\rho \right) \\
&\leq \frac{C_2}{m} V_{K\cap B(a,r)}\circ \tau^{-1} + \frac{C_2c_3}{m}\de.
\end{aligned}
$$
Therefore, for $0<r\leq r_0$ and $|z|\leq \de \leq \de_0$,
$$
	L_{E \cap \ov\bB_r}(z) \leq  \frac{A_1 \de^\mu}{r^{2+q}} + c_4 \de,
$$
where $A_1, c_4$ are  uniform constants. This is the desired inequality for $L_{E\cap\ov\bB_r}$.
\end{proof}

\end{document}